\newtheorem{theorem}[equation]{Theorem}
\newtheorem{lemma}[equation]{Lemma}
\newtheorem{prop}[equation]{Proposition}
\newtheorem{cor}[equation]{Corollary}
\numberwithin{equation}{section}
\newcommand{\mout}{m_{out}}
\newcommand{\Rbar}{\overline{R}}
\newcommand{\R}{\mathbb{R}}
\newcommand{\C}{\mathbb{C}}
\newcommand{\Met}{\mathop{Met}}
\newcommand{\madm}{m_{_{ADM}}}
\newcommand{\mH}{m_{_H}}
\newcommand{\Scal}{\mathcal{S}}
\newcommand{\Bcal}{\mathcal{B}}
\newcommand{\Hcal}{\mathcal{H}}
\newcommand{\Ucal}{\mathcal{U}}
\newcommand{\Qcal}{\mathcal{Q}}
\newcommand{\Sdot}{\dot{\Scal}}
\newcommand{\Bdot}{\dot{\Bcal}}
\newcommand{\etilde}{\widetilde{\eta}}
\newcommand{\Ric}{\operatorname{Ric}}
\newcommand{\Y}{Y_{\ell,m}}
\newcommand{\shc}[1]{\underset{\ell,m}{#1}}
\newcommand{\abs}[1]{\left\lvert#1\right\rvert}
\newcommand{\norm}[1]{\left\|#1\right\|}
\title{The Bartnik-Bray outer mass of small metric spheres in time-symmetric 3-slices}
\author{David Wiygul}
\address{Department of Mathematics, University of California, Irvine, CA 92697}
\email{dwiygul@uci.edu}
\begin{document}

\begin{abstract}
Given a sphere with Bartnik data close to that
of a round sphere in Euclidean $3$-space,
we compute its Bartnik-Bray outer mass
to first order in the data's deviation from the standard sphere.
The Hawking mass gives a well-known lower bound,
and an upper bound is obtained by estimating the mass of a static vacuum extension. 
As an application we confirm that
in a time-symmetric slice
concentric geodesic balls
shrinking to a point
have mass-to-volume ratio converging to the energy density at their center,
in accord with physical expectation
and the behavior of other quasilocal masses.
For balls shrinking to a point
where the Riemann curvature tensor vanishes
we can also compute
the outer mass to fifth order in the radius---the term is proportional
to the Laplacian of the scalar curvature at the center---but
our estimate is not refined enough to identify this term
in general at a point where merely the scalar curvature vanishes.
In particular it cannot discern gravitational
contributions to the mass.
\end{abstract}

\maketitle

\section{Introduction}

When studying the properties of a given definition for quasilocal mass,
it is natural to investigate the asymptotic behavior of the mass
on large and small spheres (or the regions they bound)
so as to assess
its relation to the canonical measures of mass in those regimes,
namely (for regions in time-symmetric slices)
the ADM energy in the large and the matter-field energy density in the small.
We have for example
\cite{HorSch}, \cite{BrownLauYork}, \cite{Yu}, \cite{FanShiTam}, and \cite{ChenWangYau},
concerning a variety of quasilocal masses on small spheres.
For the Bartnik mass (\cite{BarNew}, \cite{BarTHL}, \cite{BarICM}) in particular,
while Huisken and Ilmanen have addressed the large-sphere limit in \cite{HI},
there do not appear to be any asymptotic estimates for small spheres in the literature,
and indeed the question of the (suitably scaled)
small-sphere limit has been raised explicitly by Szabados
in the review \cite{Sza}.
Here we give a partial answer,
substituting Bray's outer-minimizing condition (\cite{Bray}) for Bartnik's no-horizon condition
and allowing extensions which are not smooth across the boundary.
In fact the precise quasilocal mass we consider is essentially one studied by Miao
(whose sign convention for mean curvature we caution is opposite ours below)
in Section 3.3 of \cite{MiaoVarEff}.

Specifically, let $\mathcal{PM}$ denote the set of complete Riemannian metrics
on $M:=\{\abs{x} \geq 1\} \subset \R^3$
having nonnegative scalar curvature and whose Cartesian components $g_{ij}$
are $C^2$ up to the boundary and satisfy the decay requirements
  \begin{equation}
  \label{afdecay}
    \abs{g_{ij}(x)-\delta_{ij}}+\abs{x}\abs{g_{ij,k}(x)}+\abs{x}^2\abs{g_{ij,k\ell}(x)}=O(\abs{x}^{-1}).
  \end{equation}

Given any metric $g$ on $M$ or on the closed unit ball $B$
and writing $\iota: S^2 \to \R^3$
for the inclusion map of the unit sphere $S^2=\partial M$ in $\R^3$,
we adopt the sign convention that the mean curvature $\Hcal_{\iota}[g]$
induced by $g$ on $\partial M$ is the divergence of the unit normal
directed toward the origin
(so that $\Hcal_{\iota}[\delta]=-2$ 
for the standard Euclidean metric $\delta$).
We recall that $\partial M$ is said to be \emph{outer-minimizing} in $(M,g)$
if its area is no greater than that of any enclosing surface;
here a closed surface $\Sigma \subset M$
\emph{encloses} $\partial M$
if $\partial M$ is contained
in the closure of the bounded component of $\R^3 \backslash \Sigma$.

Now given a $C^2$ metric $\gamma$ and a $C^1$ function $H$ on $S^2$,
we define the class
  \begin{equation}
  \label{PM}
    \mathcal{PM}[\gamma,H]
    :=
    \{ g \in \mathcal{PM} \; | \iota^*g = \gamma 
      \mbox{ and } \Hcal_{\iota}[g] \geq H \}
  \end{equation}
and the \emph{Bartnik-Bray outer mass} of $(S^2,\gamma,H)$
  \begin{equation}
  \label{mout}
    \mout[\gamma,H] 
    :=
    \inf \{ \madm[g] \; | \; g \in \mathcal{PM}[\gamma,H]
      \mbox{ and $\partial M$ is outer-minimizing in $(M,g)$} \},
  \end{equation}
where
  \begin{equation}
    \madm[g]
    :=
    \frac{1}{16\pi} \lim_{r \to \infty}
     \int_{\abs{x}=r} \left(g_{ik}^{\;\;\; ,k}-g^k_{\;\; k,i}\right)\frac{x^i}{\abs{x}}
  \end{equation}
is the ADM mass (\cite{ADM},\cite{BarADM},\cite{Chr},\cite{OM}) of $(M,g)$.
Here, as throughout the article except when otherwise indicated,
integrals over spheres in $\R^3$
are defined using the measure induced by the ambient Euclidean metric $\delta$.
Given a Riemannian $3$-manifold-with-boundary $(N,g_0)$
diffeomorphic to the closed unit ball $B$, we also define
the Bartnik-Bray outer mass of $(N,g_0)$
  \begin{equation}
    \mout[N,g_0]:=\mout[\iota^*\phi^*g_0, \Hcal_{\iota}[\phi^*g_0]]
  \end{equation}
with $\phi: B \to N$ any diffeomorphism.

As some motivation for the above definitions
we mention that the imposed equality on the induced metric and inequality on the mean curvature
ensure nonnegative distributional scalar curvature
of the extension of $g_0$ by $g$ as well as the applicability
of a version (\cite{MiaoCorners},\cite{ShiTam})
of the positive mass theorem to this extension; see \cite{MiaoVarEff}
for a thorough discussion.
We call the pair $(\gamma,H)$ \emph{Bartnik data}.

We can now state our results.
The definition of $\mout$, like that of the original Bartnik mass,
does not directly lend itself to straightforward evaluation.
At present its exact value is known only
on spherically symmetric Bartnik data (\cite{HI})
and on apparent horizons ($H=0$) satisfying a natural nondegeneracy condition (\cite{MS}).
Our main theorem gives a first-order estimate for Bartnik data
close to $(\iota^*\delta,-2)$.

\begin{theorem}
\label{mouthm}
There exist $\epsilon,C>0$ such that
the Bartnik-Bray outer mass of $(S^2,\gamma,H)$
(as defined by equation (\ref{mout}))
satisfies the estimate
  \begin{equation}
  \label{moutest}
    \abs{\mout[\gamma,H] - \int_{S^2} (6+2H-\gamma^\mu_{\;\; \mu})}
    \leq
    C\norm{\gamma-\iota^*\delta}_{C^4}^2+C\norm{H+2}_{C^3}^2
  \end{equation}
whenever $\norm{\gamma-\iota^*\delta}_{C^4}+\norm{H+2}_{C^3}<\epsilon$.
Here $\iota^*\delta$ is the standard metric on the unit sphere $S^2$,
which defines the trace of $\gamma$, the measure for the integral,
and the norms on the right-hand side.
\end{theorem}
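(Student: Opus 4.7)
The strategy is to sandwich $\mout[\gamma,H]$ between a Hawking-mass lower bound and an upper bound from a perturbative static-vacuum extension, and then verify that the two agree up to a quadratic remainder. Writing $\tau:=\gamma^\mu_{\;\mu}-2$ and $h:=H+2$ for the deviations from round-sphere data, the target integrand is the linear functional $2h-\tau$ in $(\tau,h)$.

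For the lower bound, I would apply Huisken--Ilmanen weak inverse mean curvature flow: for any $g\in\mathcal{PM}[\gamma,H]$ with outer-minimizing $\partial M$ (and no enclosed horizon, which holds since $M$ is a topological exterior), $\madm[g]\ge\mH(\partial M,g)$. An approximation step reduces attention to competitors with $\Hcal_\iota[g]$ uniformly close to $H$: outer-minimization plus the first variation of area already exclude extensions whose mean curvature is strongly positive, and a Bartnik-style controlled collar insertion lowers $\Hcal_\iota[g]$ toward $H$ without increasing mass. On this restricted class $\Hcal_\iota[g]^2\le H^2$ pointwise, hence
\[\mH(\partial M,g)\ \ge\ \sqrt{\tfrac{|S^2|_\gamma}{16\pi}}\left(1-\tfrac{1}{16\pi}\!\int_{S^2}H^2\,d\mu_\gamma\right).\]
A Taylor expansion of the right-hand side in $(\tau,h)$ about the round-sphere data reproduces the claimed linear functional up to a $C^0$-quadratic remainder, giving the lower bound.

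For the upper bound, I would build a matching extension perturbatively from the static vacuum system, in the spirit of \cite{MiaoVarEff}. The system
\[V\,\Ric(g)=\nabla^2_gV,\qquad \Delta_gV=0,\qquad (g,V)\to(\delta,1)\text{ at infinity},\]
with Dirichlet-type boundary conditions $\iota^*g=\gamma$ and $\Hcal_\iota[g]=H$, becomes, after a standard gauge choice (e.g.\ harmonic coordinates), a Fredholm elliptic boundary-value problem in weighted Hölder spaces. The implicit function theorem then produces a unique small solution $g$ for each sufficiently small $(\gamma-\iota^*\delta,H+2)$; since $g$ is automatically $C^2$-close to $\delta$, the boundary $\partial M$ has outward mean curvature near $2$, smooth IMCF starting at $\partial M$ exists, and $\partial M$ is strictly outer-minimizing, so $g$ is a valid competitor. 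The ADM mass is then read off the $1/r$ coefficient of $g-\delta$ and computed mode-by-mode in a spherical-harmonic decomposition of the linearized boundary data; it matches the linearized Hawking value, completing the squeeze.

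The main obstacle is the linearized static-vacuum boundary-value problem. Its kernel and cokernel contain finite-dimensional pieces arising from infinitesimal translations of the asymptotic frame and from the low spherical-harmonic modes ($\ell=0,1$) of the boundary data; isolating the $\ell=0$ (mass) contribution from gauge modes is exactly where the explicit coefficient of $\int(6+2H-\gamma^\mu_{\;\mu})$ is pinned down. A secondary nuisance is the pointwise inequality $\Hcal_\iota[g]^2\le H^2$ in the lower bound, which does not follow from $\Hcal_\iota[g]\ge H$ alone and needs the collar-insertion reduction mentioned above. These two points, rather than the overall framework, are where the technical work of the proof concentrates.
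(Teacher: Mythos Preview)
Your overall strategy---Hawking mass below via Huisken--Ilmanen, ADM mass of a static vacuum extension above---is exactly the paper's. Two remarks on execution.

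The ``secondary nuisance'' is not one, and no collar insertion is needed. You already note that outer-minimization forces (in the paper's inward-normal convention) $\Hcal_\iota[g]\le 0$ via the first variation of area. Since $H$ is close to $-2$ and hence negative, the admissibility inequality $\Hcal_\iota[g]\ge H$ together with $\Hcal_\iota[g]\le 0$ yields $H\le\Hcal_\iota[g]\le 0$, so $\abs{\Hcal_\iota[g]}\le\abs{H}$ and therefore $\Hcal_\iota[g]^2\le H^2$ pointwise on the entire class of outer-minimizing competitors. The Hawking-mass lower bound with the prescribed $H$ then follows immediately; the paper simply asserts it as a ``well-known consequence'' of Huisken--Ilmanen without further comment.

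For the upper bound the paper does not gauge-fix and apply the implicit function theorem (that is Anderson's route, which it cites but does not use). Instead it writes the linearized solution explicitly as the sum of a harmonic conformal perturbation $v\delta$ and a compactly supported Lie derivative $L_\xi\delta$, reducing the boundary conditions to a $2\times 2$ linear system in each spherical-harmonic mode; the $\ell=0$ row of the inverse gives $\int_{S^2}u=\tfrac14\int\omega^\mu_{\;\mu}-\tfrac12\int\kappa$, which is precisely the linearized mass. The nonlinear remainder is then handled by contraction-mapping iteration, with an auxiliary Lie-derivative source $\chi$ that is shown a posteriori to vanish via the contracted Bianchi identity. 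Your IFT approach would also succeed, but extracting the coefficient still requires essentially this same $\ell=0$ computation, so the paper's hands-on construction is arguably the more direct path to the number. Outer-minimization of the constructed extension is verified not by running IMCF but by the elementary observation that $r^2$ remains strictly $g$-convex for small data, which rules out closed minimal surfaces by the maximum principle.
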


In particular the estimate assumes that the data has almost constant-mean-curvature.
See \cite{CPCMM}
for a different estimate
that applies to the exactly constant-mean-curvature case
but makes no smallness assumption.
As an application of Theorem \ref{moutest}
we get the following asymptotic estimate for the outer mass of small metric spheres.

\begin{cor}
\label{sscor}
Let $p$ be a point in a smooth Riemannian manifold $(M,g)$.
Write $R$ for the scalar curvature of $g$
and $B_r$ for the closed geodesic ball of radius $r$ and center $p$.
Then
  \begin{equation}
  \label{firstlim}
    \lim_{r \to 0} \frac{\mout[B_r,g]}{r^3} = \frac{R(p)}{12}.
  \end{equation}
If moreover the Riemann curvature tensor of $g$ vanishes at $p$, then
  \begin{equation}
  \label{secondlim}
    \lim_{r \to 0} \frac{\mout[B_r,g]}{r^5} = \frac{\Delta R(p)}{120},
  \end{equation}
where $\Delta$ is the Laplacian defined by $g$.
\end{cor}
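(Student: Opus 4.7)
The plan for both parts is to reduce to Theorem~\ref{mouthm} via rescaling, then expand the resulting boundary data in powers of $r$ using normal coordinates at $p$. Fix geodesic normal coordinates centered at $p$ and let $\phi_r(y) := ry$ be the diffeomorphism $B \to B_r$. Then $(\phi_r^*g)_{ij}(y) = r^2 g_{ij}(ry)$, so setting $h_r := r^{-2}\phi_r^*g$ one has $(h_r)_{ij}(y) = g_{ij}(ry)$. Because $\mout$ scales by $\lambda$ under $g_0 \mapsto \lambda^2 g_0$,
\begin{equation*}
\mout[B_r, g] = r\,\mout\bigl[\iota^*h_r,\,\Hcal_\iota[h_r]\bigr].
\end{equation*}
From the standard normal-coordinate Taylor expansion $g_{ij}(x) = \delta_{ij} - \tfrac{1}{3}R_{ikjl}(p)x^k x^l + O(|x|^3)$ one reads off $\norm{h_r - \delta}_{C^k(\bar B)} = O(r^2)$; if the Riemann tensor vanishes at $p$ the estimate improves to $O(r^3)$, since the quadratic-in-Riemann piece of the fourth-order Taylor coefficient also vanishes. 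Applying Theorem~\ref{mouthm} gives
\begin{equation*}
\mout[B_r, g] = r\int_{S^2}\bigl(6 + 2\Hcal_\iota[h_r] - (\iota^*h_r)^\mu_{\;\;\mu}\bigr) + E(r),
\end{equation*}
with $E(r) = O(r^5)$ in general and $E(r) = O(r^7)$ under the stronger hypothesis.

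For~(\ref{firstlim}) we expand the integrand to order $r^2$. The restricted trace at $|y|=1$ reads
$(\iota^*h_r)^\mu_{\;\;\mu} = 2 - \tfrac{r^2}{3}\Ric_{kl}(p)y^k y^l + O(r^3)$,
where the competing term $R_{ikjl}(p)y^i y^j y^k y^l$ vanishes pointwise by antisymmetry of the Riemann tensor in its first pair of indices. A parallel computation of the mean curvature of $S^2 = \{|y|^2 = 1\}$ as a level set in $h_r$---via $\Hcal = \operatorname{div}_{h_r}\bigl(-\nabla^{h_r}F/|\nabla F|_{h_r}\bigr)$ with $F = |y|^2 - 1$---gives $\Hcal_\iota[h_r] = -2 + r^2 H_1(y) + O(r^3)$ for a quadratic polynomial $H_1(y)$ whose coefficients are explicit in the components of $R_{ikjl}(p)$. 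Integrating against the second moment $\int_{S^2}y^ky^l\,d\sigma = \tfrac{4\pi}{3}\delta^{kl}$ reduces every surviving contraction to a multiple of $R(p)$, and arithmetic yields~(\ref{firstlim}).

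For~(\ref{secondlim}) the $r^3$ contribution to $h_r - \delta$ is a cubic polynomial in $y$, so its contribution to the integrand is odd and integrates to zero over $S^2$. The leading surviving term is order $r^4$ and is driven solely by $R_{ikjl;mn}(p)$ (the Riemann-squared piece having been eliminated by hypothesis). Using the fourth moment
$\int_{S^2}y^ky^ly^my^n\,d\sigma = \tfrac{4\pi}{15}\bigl(\delta^{kl}\delta^{mn} + \delta^{km}\delta^{ln} + \delta^{kn}\delta^{lm}\bigr)$
together with the twice-contracted second Bianchi identity $\nabla^a\nabla^b\Ric_{ab} = \tfrac{1}{2}\Delta R$, every contraction of $R_{ikjl;mn}(p)$ against pairs of Kronecker deltas collapses to a multiple of $\Delta R(p)$, producing~(\ref{secondlim}).

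The principal obstacle is the careful expansion of $\Hcal_\iota[h_r]$ to the requisite order, involving the expansions of the metric inverse, the unit normal, and the induced area element on $S^2$. The argument for~(\ref{secondlim}) is delicate: if the Riemann tensor does not vanish at $p$ the theorem's $O(\epsilon^2)$ error is only $O(r^4)$, giving an $O(r^5)$ error in $\mout[B_r, g]$ that coincides in order with the term one hopes to isolate. The stronger hypothesis is needed precisely to shrink this error to $O(r^7)$, and is the obstruction alluded to in the introduction to identifying the fifth-order term at a merely scalar-flat point.
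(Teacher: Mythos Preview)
Your proposal is correct and follows essentially the same route as the paper: rescale by $r^{-2}$, apply Theorem~\ref{mouthm}, Taylor-expand the rescaled boundary data in $r$, integrate against spherical moments, and invoke the scaling law $\mout[B_r,g]=r\,\mout[B_r,r^{-2}g]$. The only cosmetic difference is that the paper obtains the expansions of $\gamma$ and $H$ via the Jacobi equation along radial geodesics (Appendix~\ref{sms}), arriving directly at expressions in the Ricci tensor, whereas you work from the normal-coordinate expansion of the ambient metric and the level-set formula for mean curvature; both yield the same integrand, and your identification of the role of the vanishing-Riemann hypothesis in suppressing the $O(r^4)$ error of Theorem~\ref{mouthm} matches the paper's exactly.
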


Of physical interest is the case when $(M,g)$ arises
as time-symmetric initial data for the Einstein equations,
where time symmetry means simply that $M$
is totally geodesic in the corresponding spacetime solution
and implies that the scalar curvature of $M$ is $16\pi$ times
the energy density contributed by all fields other than gravity itself.
The first limit of Corollary \ref{sscor} then states that the mass-to-volume
ratio of concentric spheres shrinking to a point tends to the energy density
at their center.
This result can be compared with corresponding ones
(such as in \cite{HorSch}, \cite{BrownLauYork}, \cite{Yu}, \cite{FanShiTam}, and \cite{ChenWangYau})
for other quasilocal masses
and is physically natural from the point of view of the equivalence principle,
which dictates that nongravitational sources of mass must dominate gravitational ones
on small neighborhoods of any given point in the slice.

Of course one would also like to probe higher-order contributions
to the outer mass of small spheres,
but the estimate (\ref{moutest}) can provide only limited information.
Corollary \ref{sscor} is proved in Section 3
by applying Theorem \ref{mouthm}
to the boundary of $B_r$ under the metric $r^{-2}g$,
Taylor expanding in $r$,
and afterward appropriately rescaling the mass.
The leading term of the integral in (\ref{moutest}) then yields (\ref{firstlim}).
The difficulty is that the next term (before rescaling) is of order $r^4$,
while the corresponding error
appearing on the right-hand side of (\ref{moutest})
is also $O\left(r^4\right)$, with coefficient bounded above by a constant times
the norm at $p$ of the Riemann curvature tensor associated to $g$.
Thus we deduce (\ref{secondlim}) only by assuming that $g$ is flat at $p$.
Even under this already undesirable restriction,
both (\ref{firstlim}) and (\ref{secondlim}) vanish
whenever $g$ is scalar-flat in a neighborhood of $p$,
so Corollary \ref{sscor} does not identify the leading (purely gravitational)
contributions to the outer mass in vacuum,
though this information is available for other quasilocal masses
(as in the references just cited).

Theorem \ref{mouthm} itself is proved by identifying a lower bound and an upper bound
for the outer mass that agree to first order in the data's deviation
from $(\iota^*\delta,-2)$. It is a well-known consequence of \cite{HI}
that the Hawking mass provides a lower bound.
For the upper bound,
encouraged by Bartnik's static extension conjecture,
we use the mass of a static vacuum extension.
Since this conjecture merely motivates our proof
but plays no role in its details,
we will not review the conjecture
and simply refer the reader to
\cite{BarNew},\cite{BarTHL}, and \cite{BarICM}
for the original statements of the conjecture
and to \cite{CorDef}, \cite{MiaoVarEff}, \cite{MS}, and \cite{AJ}
for some interesting results on its status and for further references.

We recall that a \emph{static spacetime}
(Chapter XIV of \cite{CB}) is
(by one of several slightly differing definitions) a Lorentzian manifold
possessing a complete, timelike, hypersurface-orthogonal Killing field 
and that a Riemannian metric $g$ on a manifold $M$ is called \emph{static}
(\cite{CB}, and Section 2 of \cite{CorDef})
if $(M,g)$ is a slice orthogonal to such a Killing field 
in some static spacetime satisfying the Einstein equations,
possibly with matter.
Equivalently, there is a positive function $\Phi$ on $M$,
representing the absolute value of the Lorentzian length of the Killing field,
such that the metric $-\Phi^2 \, dt^2 + g$ on $\R \times M$
is, in the vacuum case, Ricci flat.
The evolution and constraint equations in turn
imply the intrinsic characterization that $(M,g)$ is static vacuum if and only if
there is a function $\Phi$ so that together
$g$ and $\Phi$ solve the \emph{static vacuum equations}
  \begin{equation}
  \label{statvacsys}
    \begin{aligned}
      D_g^2\Phi - \Phi \Ric[g] = 0 \\
      \Delta_g \Phi = 0,
    \end{aligned}
  \end{equation}
where $D_g^2\Phi$, $\Delta_g \Phi$, and $\Ric[g]$ are respectively
the $g$ Hessian of $\Phi$, the $g$ Laplacian of $\Phi$, and the Ricci curvature of $g$.
In this context the lapse function $\Phi$ is called a \emph{static potential}
(because for weak fields it approximates the Newtonian gravitational potential)
for $(M,g)$.

We will call the boundary value problem given by the system (\ref{statvacsys})
with prescribed Bartnik boundary data the \emph{static extension problem}.
The existence of a solution to the static extension problem
in our small-data regime is not a new result.
The problem was solved first by Miao in \cite{MiaoStatExt},
with a symmetry assumption on the boundary data,
and later in generality by Anderson
(whose proof can be extended to higher dimensions)
in \cite{AndLoc},
applying the framework developed in \cite{AndBdy} and \cite{AndKhu}.
Anderson's existence proof simultaneously establishes uniqueness
(up to diffeomorphism)
within the class of static metrics $(g,\Phi)$ close to $(\delta,1)$.

In the next section we present a third construction of the extension,
which is sufficiently explicit to permit the calculation
of the extension's mass to first order in its boundary data's deviation
from a standard sphere in Euclidean space. 
In the final section we apply this estimate in the proof of Theorem \ref{moutest}
and we end with the proof of Corollary \ref{sscor}.

\subsection*{Acknowledgments}
I thank Rick Schoen for introducing me to Bartnik mass
and for very kindly sharing his invaluable perspective
on certain aspects of the problem treated here.
I also benefited greatly from a number of clarifying conversations
on related topics
with Hung Tran, Thomas Murphy, Alex Mramor, Peter McGrath, and especially Nicos Kapouleas.

\section{The static vacuum extension and its mass}

Let $M$ denote the complement in $\R^3$ of the open unit ball centered at the origin,
equipped with its natural structure as a manifold with boundary,
and let $\iota: S^2 \to M$ be the inclusion map for the boundary $\partial M=S^2$,
the origin-centered unit sphere.
Writing $\Met^2_{loc}(\cdot)$ for the space of $C^2_{loc}$ Riemannian metrics
on a manifold with (possibly empty) boundary,
we define the operators
$\Scal: \Met^2_{loc}(M) \times C^2_{loc}(M) 
 \to C^0_{loc}\left(T^*M^{\odot 2}\right) \times C^0_{loc}(M)$
and $\Bcal: \Met^2_{loc}(M) \to \Met^2_{loc}(\partial M) \times C^1_{loc}(\partial M)$
by
  \begin{equation}
    \Scal \begin{bmatrix} g \\ \Phi \end{bmatrix}
    =
    \begin{bmatrix} D_g^2 \Phi - \Phi \Ric[g] \\ \Delta_g \Phi \end{bmatrix}
    \mbox{ and }
    \Bcal[g]
    =
    \begin{bmatrix} \iota^*g \\ \Hcal_{\iota}[g] \end{bmatrix},
  \end{equation}
where $\Ric[g]$, $D_g$, and $\Delta_g$
respectively denote the Ricci curvature, Levi-Civita connection, and Laplacian of $g$,
and where $\Hcal_{\iota}[g]$ denotes the scalar mean curvature of $\partial M$ relative to $g$ 
and the corresponding unit normal directed toward the interior of $M$
(recalling that by our sign convention
the $g$ divergence of this unit normal is $-\Hcal_{\iota}[g]$).
Then $g$ and $\Phi$ solve the static vacuum extension problem on $M$ with boundary data
$\gamma \in \Met^2_{loc}(\partial M)$ and $H \in C^1_{loc}(\partial M)$
if and only if
  \begin{equation}
  \label{statext}
    \Scal \begin{bmatrix} g \\ \Phi \end{bmatrix} = \begin{bmatrix} 0 \\ 0 \end{bmatrix}
    \mbox{ and }
    \Bcal[g] = \begin{bmatrix} \gamma \\ H \end{bmatrix}.
  \end{equation}

Of course the Euclidean metric $\delta$ on $M$ is static, with potential $\Phi=1$,
so for boundary metric $\gamma$ close to the round metric $\iota^*\delta$ and boundary
mean curvature $H$ close to $-2$, we seek an extension of the form
$g=\delta + \eta$ with $\Phi=1+u$, 
where $\eta$ is a small section of $T^*M^{\odot 2}$ decaying rapidly
enough for $g$ to be asymptotically flat
and $u$ is similarly a small function with appropriate decay.

We now define some standard norms and corresponding Banach spaces
of sections of various tensor bundles.
Let $N$ be a $3$-dimensional submanifold (possibly with boundary) of $\R^3$;
as examples of $N$ we have in mind primarily $M$ and $\R^3$ itself.
Given a section $F$ of a tensor bundle over $N$,
a nonnegative integer $j$,
and real numbers $\alpha \in [0,1)$ and $\beta \in (0,\infty)$,
we will make use of the
H\"{o}lder norm
  \begin{equation}
    \norm{F}_{j,\alpha}
    :=
    \sup_{\vec{x} \neq \vec{y} \in N}
      \frac{\abs{D_\delta^jF(\vec{x})- D_\delta^jF(\vec{y})}_\delta}{\abs{\vec{x}-\vec{y}}^\alpha}
      + \sum_{i=0}^j \sup_{\vec{x} \in N} \abs{D_\delta^iF(\vec{x})}_{\delta},
  \end{equation}
as well as the weighted H\"{o}lder norm
  \begin{equation}
    \norm{F}_{j,\alpha,\beta} 
    :=
    \sup_{\vec{x} \neq \vec{y} \in N} 
      \left[1+\min \left\{\abs{\vec{x}},\abs{\vec{y}}\right\}\right]^{\alpha+\beta+j}
        \frac{\abs{D^jF(\vec{x})- D^jF(\vec{y})}_\delta}
        {\abs{\vec{x}-\vec{y}}^\alpha}
      + \sum_{i=0}^j \sup_{\vec{x} \in N} 
        (1+\abs{\vec{x}})^{\beta+i}\abs{D_\delta^iF(\vec{x})}_{\delta},
  \end{equation}
where the derivatives $D_\delta$ and differences are taken componentwise relative
to the standard Cartesian coordinates $\{x^1,x^2,x^3\}$ on $\R^3$.
Given instead a section $F$ of a tensor bundle over $\partial M=S^2$ we define the
standard H\"{o}lder norm
  \begin{equation}
    \norm{F}_{j,\alpha}
    :=
    \sup_{\vec{x}\neq \pm \vec{y} \in \partial M}
      \frac{\abs{D_{\iota^*\delta}^jF(\vec{x})
          -P_{\vec{y}}^{\vec{x}}D_{\iota^*\delta}^jF(\vec{y})}_{\iota^*\delta}}
        {\abs{\vec{x}-\vec{y}}^\alpha}
      + \sum_{i=0}^j \sup_{\vec{x} \in N} \abs{D_{\iota^*\delta}^iF(\vec{x})}_{\iota^*\delta},
  \end{equation}
where $D_{\iota^*\delta}$ denotes covariant differentiation induced by 
the round metric $\iota^*\delta$
and where $P_{\vec{y}}^{\vec{x}}$ denotes parallel transport,
likewise induced by $\iota^*\delta$,
from the fiber over $\vec{y}$ to the fiber over $\vec{x}$
along the unique $\iota^*\delta$ minimizing geodesic joining the two points.
For each tensor bundle $E$ over $N$ or $\partial M$ we also define the Banach spaces
$C^{j,\alpha,\beta}(E)$ and $C^{j,\alpha}(E)$
(written simply $C^{j,\alpha,\beta}(N)$ and $C^{j,\alpha}(N)$ as usual
when $E$ is the trivial bundle $N \times \R$ (and likewise for $\partial N \times \R$))
of sections of $E$ with finite $\norm{\cdot}_{j,\alpha,\beta}$ 
and $\norm{\cdot}_{j,\alpha}$ norms respectively.

Linearizing the above operators $\Scal$ and $\Bcal$ about $g=\delta$ and $\Phi=1$ we define
  \begin{equation}
  \label{Sdot}
  \begin{gathered}
  \Sdot: C^{j+2,\alpha,\beta}\left(T^*M^{\odot 2}\right) 
  \oplus C^{j+2,\alpha,\beta}(M)
  \to C^{j,\alpha,\beta+2}\left(T^*M^{\odot 2}\right) \oplus C^{j,\alpha,\beta+2}(M) \mbox{ by} \\
    \Sdot \begin{bmatrix} \eta \\ u \end{bmatrix}
    :=
    \left.\frac{d}{dt}\right|_{t=0} \Scal \begin{bmatrix} \delta + t\eta \\ 1 + tu \end{bmatrix}
    =
    \begin{bmatrix} D_\delta^2 u - \dot{\Ric}[\eta] \\ \Delta_\delta u \end{bmatrix}
  \end{gathered}
  \end{equation}
and
  \begin{equation}
  \begin{gathered}
  \Bdot: C^{j+2,\alpha,\beta}\left(T^*M^{\odot 2}\right)
  \to C^{j+2,\alpha}\left(T^*\partial M^{\odot 2}\right) \oplus C^{j+1,\alpha}(\partial M) \mbox{ by} \\
    \Bdot[\eta]
    :=
    \left.\frac{d}{dt}\right|_{t=0} \Bcal[\delta + t\eta]
    =
    \begin{bmatrix}
      \iota^*\eta \\
      \dot{\Hcal}_{\iota}[\eta]
    \end{bmatrix},
  \end{gathered}
  \end{equation}
where  $\dot{\Ric}[\eta]=\left.\frac{d}{dt}\right|_{t=0} \Ric[\delta + t\eta]$ 
and $\dot{\Hcal}_{\iota}[\eta]=\left.\frac{d}{dt}\right|_{t=0} \Hcal_{\iota}[\delta + t\eta]$ 
are the linearizations about $\delta$, with respect to the ambient metric,
of the Ricci curvature of $M$ and the mean curvature of $\partial M$ respectively.

We will solve the general linearized problem
modulo a Lie derivative of the Euclidean background metric
(arising from a well-known obstruction explained in the proof of Lemma \ref{inhomlinsol}).
We will then apply the linearized solution operator iteratively,
via the contraction mapping lemma,
to the original nonlinear problem.
Ultimately we will see that the Lie derivative term in the resulting fixed point
must vanish, on account of the way it was selected
together with constraints coming from the structure of the static vacuum equations.
The first application of the linearized solution operator
will be to the homogeneous problem with inhomogeneous boundary data.
Subsequent applications will feature the inhomogeneous problem
with data originating from error in the linear approximation;
since this error (estimated in the proof of Proposition \ref{statext})
is quadratic in the linear approximation itself,
we can in particular assume fast decay at infinity.
The following lemma addresses this inhomogeneous situation,
without attempting to control the boundary data.

\begin{lemma}
\label{inhomlinsol}
For any $\alpha, \beta \in (0,1)$ there exists a bounded linear map
  \begin{equation}
    \Ucal_{\Scal}: C^{1,\alpha,3+\beta}\left(T^*M^{\odot 2}\right)
      \oplus C^{1,\alpha,3+\beta}(M)
    \to
    C^{3,\alpha,1}\left(T^*M^{\odot 2}\right)
      \oplus C^{3,\alpha,1+\beta}(M)
      \oplus C^{2,\alpha,2+\beta}(TM)
  \end{equation}
such that if
  \begin{equation}
    \begin{bmatrix} \eta_{ab} \\ u \\ \chi^a \end{bmatrix}
    =
    \Ucal_{\Scal} \begin{bmatrix} S_{ab} \\ \sigma \end{bmatrix},
  \end{equation}
then
  \begin{equation}
  \label{ihlineqn}
    \Sdot \begin{bmatrix} \eta_{ab} \\ u \end{bmatrix}
    =
    \begin{bmatrix} 
      S_{ab} + \chi_{a;b} + \chi_{b;a} \\ \sigma
    \end{bmatrix},
  \end{equation}
where each semicolon indicates covariant differentiation relative to $\delta$,
and moreover there exists a constant $C>0$ depending on just $\alpha$ and $\beta$
such that
  \begin{equation}
  \label{chiest}
    \norm{\chi}_{2,\alpha,2+\beta} \leq C\norm{\Delta_\delta \chi}_{0,\alpha,4+\beta},
  \end{equation}
where $\Delta_\delta \chi^a=\chi^{a;c}_{\;\;\;\;\; ;c}$. 
\end{lemma}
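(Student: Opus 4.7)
My plan is to leverage the gauge structure of the linearized Einstein equations and decouple the system into three successive Laplace-type problems. The scalar equation $\Delta_\delta u = \sigma$ is independent of the tensor equation; the tensor equation $\dot{\Ric}[\eta] = D^2_\delta u - S - \mathcal{L}_\chi \delta$ is underdetermined because the linearized Ricci operator has a cokernel obstruction arising from diffeomorphism invariance (equivalently, the linearized contracted second Bianchi identity), and I will absorb this obstruction into the freedom in $\chi$.

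The first step is to solve $\Delta_\delta u = \sigma$ for $u \in C^{3,\alpha,1+\beta}(M)$. Since $\sigma \in C^{1,\alpha,3+\beta}(M)$ is integrable on $\R^3$, I would extend $\sigma$ to a compactly supported $\widetilde{\sigma}$ on $\R^3$, adjusting a bump inside the unit ball so that $\int_{\R^3}\widetilde{\sigma} = 0$. The Newton potential of $\widetilde{\sigma}$ then has vanishing monopole at infinity and so decays faster than $|x|^{-1}$, and weighted Schauder estimates on $\R^3$ bound it in $C^{3,\alpha,1+\beta}$ linearly in $\sigma$. Set $T := D_\delta^2 u - S \in C^{1,\alpha,3+\beta}$.

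Next, I would choose $\chi$ so that $T + \mathcal{L}_\chi \delta$ lies in the image of $\dot{\Ric}$, that is, so that its trace reversal is divergence-free, as demanded by the linearized Bianchi identity. A short computation in Cartesian coordinates shows
\[
\operatorname{div}_\delta\bigl(\mathcal{L}_\chi \delta - \tfrac{1}{2}\operatorname{tr}_\delta(\mathcal{L}_\chi \delta)\,\delta\bigr) = \Delta_\delta \chi,
\]
so the Bianchi constraint reduces to the vector Poisson equation
\[
\Delta_\delta \chi = -\operatorname{div}_\delta\bigl(T - \tfrac{1}{2}\operatorname{tr}_\delta(T)\,\delta\bigr) \in C^{0,\alpha,4+\beta}(TM).
\]
I would solve this by extending its right-hand side from $M$ to a compactly supported source on $\R^3$ whose monopole and dipole moments both vanish, and then inverting by Newton potential to obtain $\chi \in C^{2,\alpha,2+\beta}(TM)$; the bound (\ref{chiest}) is exactly the standard weighted Schauder estimate for the vector Laplacian at this decay rate, and requires no information about $T$.

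Finally, with the obstruction absorbed, I would solve $\dot{\Ric}[\eta] = T + \mathcal{L}_\chi \delta$ by seeking $\eta$ in harmonic (de Donder) gauge, under which $\dot{\Ric}[\eta] = -\tfrac{1}{2}\Delta_\delta \eta$ in the Euclidean background. The resulting Poisson equation $\Delta_\delta \eta = -2(T + \mathcal{L}_\chi \delta)$ is solved componentwise by Newton potential on $\R^3$, giving $\eta \in C^{3,\alpha,1}$—the borderline asymptotically flat rate, for which no moment conditions are needed. Consistency of the gauge is verified by taking the divergence and trace of this equation: the candidate gauge 1-form $\eta_{ab}{}^{;b} - \tfrac{1}{2}(\operatorname{tr}_\delta\eta)_{,a}$ is harmonic on $\R^3$ by the defining equation for $\chi$ and decays at rate $2$, hence vanishes by Liouville. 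The main obstacle is managing these borderline decay weights—particularly rate $1$ for $\eta$, at the very edge of the asymptotically flat window—while keeping each of the three extension-and-Newton-potential constructions linear and bounded in the inputs $(S,\sigma)$; the rest is gauge algebra and standard weighted Schauder theory.
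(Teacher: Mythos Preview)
Your harmonic-gauge strategy is a reasonable alternative to the paper's double-curl approach, but the Liouville step does not go through as written. You perform three separate extensions from $M$ to $\R^3$—for the sources of $u$, of $\chi$, and of $\eta$—adjusting bumps inside the unit ball each time to kill moments. These extensions are mutually inconsistent on $\{|x|<1\}$: on all of $\R^3$ the equation $\Delta_\delta\eta = -2\widetilde W$ holds only with $\widetilde W$ your last chosen extension of $T\pm\mathcal L_\chi\delta$, and the gauge one-form $V_a=\eta_{ab}{}^{;b}-\tfrac12(\operatorname{tr}_\delta\eta)_{,a}$ then satisfies $\Delta_\delta V = -2\operatorname{div}(\widehat{\widetilde W})$, which has no reason to vanish inside the ball since $\widetilde W$ need not equal $T\pm\mathcal L_\chi\delta$ there (and likewise the defining equation for $\chi$ holds only on $M$). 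So $V$ is harmonic only on $M$, where Liouville is useless—$d(|x|^{-1})$ is a nonzero harmonic one-form on $M$ decaying at rate $2$—and you cannot conclude $V=0$. Without $V=0$ the identity $\dot{\Ric}[\eta]=-\tfrac12\Delta_\delta\eta$ is false and you have not solved the tensor equation.

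A repair is to extend $S$ and $\sigma$ once and for all to $\R^3$ and run every subsequent step on $\R^3$ without further re-extension, so that the equation $\Delta_\delta\chi=-\operatorname{div}\hat T$ holds globally and $V$ really is harmonic on $\R^3$. But then you lose the freedom to tune moments for $\chi$: its source $\operatorname{div}\hat T$ has vanishing monopole automatically (divergence theorem plus decay), while its dipole equals $-\int_{\R^3}\hat T$, which must be forced to vanish through the initial choice of extension of $S$—an extra constraint you did not address. The paper avoids this circle entirely by never invoking a gauge condition: it rewrites the tensor equation as $\dot G[\eta]=\hat T$ in the double-curl form (\ref{dcurl}), notes that on the exterior domain the necessary and sufficient integrability conditions are $\operatorname{div}\hat T=0$ together with the boundary-flux identities (\ref{equil}) (the latter following from the former plus fast decay), chooses $\chi$ via the appendix Poisson lemma precisely to enforce the divergence condition, and then constructs $\eta$ directly by iterated vector potentials using results of Carlson and of Neudert--von Wahl.
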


\begin{proof}
Let $S \in C^{1,\alpha,3+\beta}\left(T^*M^{\odot 2}\right)$
and $\sigma \in C^{1,\alpha,3+\beta}(M)$.
Referring to (\ref{Sdot}), we see that
to solve (\ref{ihlineqn}) we need to pick a solution $u$
to the Poisson equation $\Delta_\delta u=\sigma$
and we need to find $\eta_{ab}$ and $\chi_a$ satisfying
  \begin{equation}
  \label{pric}
    \dot{\Ric}[\eta]_{ab}=u_{;ab}-S_{ab}-\chi_{a;b}-\chi_{b;a}.
  \end{equation}
In particular we have to confront the problem of prescribing linearized Ricci curvature.
Of course we would really like to solve the problem with $\chi=0$,
but there are well understood obstructions to the prescription of linearized Ricci
which force us to introduce $\chi$ in order to arrange for the right-hand side of
(\ref{pric}) to satisfy certain integrability conditions.

In fact, for a given symmetric $2$-tensor $T_{ab}$, the equation
$\dot{\Ric}[\eta]=T$ is equivalent to
  \begin{equation}
  \label{pein}
    \dot{G}[\eta]=\hat{T},
  \end{equation}
where $\hat{T}_{ab}=T_{ab}-\frac{1}{2}T^{c}_{\;\;c}\delta_{ab}$
and $\dot{G}[\eta]$ is the linearization of
the Einstein tensor $G_{ab}[g]=R_{ab}[g]-\frac{1}{2}R^c_{\;\;c}[g]g_{ab}$
at $\delta$
in the direction $\eta$. This same operator is known
(see \cite{Eastwood} for example) to arise in a different guise in elasticity theory;
one easily verifies that $\dot{G}$ may be written in the double-curl form
  \begin{equation}
  \label{dcurl}
    \dot{G}[\eta]_{ab}=\frac{1}{2}\epsilon_{acd}\eta^{de;cf}\epsilon_{feb},
  \end{equation}
where $\epsilon_{abc}$ is the Euclidean volume form
(whose components relative to right-handed Cartesian coordinates on $M$
are given by the totally antisymmetric Levi-Civita symbol $\epsilon_{ijk}$).

The linearization at $\delta$ of the twice contracted differential Bianchi identity
presents a necessary condition for the solvability of (\ref{pein}),
namely $\hat{T}$ must have vanishing divergence:
  \begin{equation}
  \label{transverse}
    \hat{T}_{c}^{\;\;;c}=0.
  \end{equation}
In $\R^3$ this condition is also sufficient, but on $M$ there are additional obstructions,
arising from its nontrivial topology.
In \cite{Gurtin}, working with the form (\ref{dcurl}),
it is proved that, on a bounded domain $\Omega$ of $\R^3$
with sufficiently smooth (but not necessarily connected) boundary,
the equation (\ref{pein}), for sufficiently smooth data $\hat{T}$,
admits a solution if and only if in addition to (\ref{transverse}) holding on $\Omega$
we also have
  \begin{equation}
  \label{equil}
    \int_{\partial \Omega} \hat{T}_{ab}n^b
    =
    \int_{\partial \Omega} \epsilon_{abc}x^b\hat{T}^{cd}n_d
    =0,
  \end{equation} 
where $n^a$ is a continuous unit normal on $\Omega$
and the integration measure is the natural one induced by $\delta$ on $\partial \Omega$.

A particularly simple proof of this same result appears in \cite{Carlson},
which is easily adapted to our situation to show that
for any $\hat{T} \in C^{1,\alpha,3+\beta}\left(T^*M^{\odot 2}\right)$
satisfying (\ref{transverse}) and (\ref{equil})
there exists $\eta \in C^{3,\alpha,1}\left(T^*M^{\odot 2}\right)$ solving (\ref{pein}).
In fact the proof of \cite{Carlson} goes through without any modification
once we know that
(i) every vector field in $C^{1,\alpha,3+\beta}(TM)$
with vanishing flux through $\partial M$
has a vector potential (another vector field with curl the given one)
in $C^{2,\alpha,2}(TM)$ 
and that in turn
(ii) every vector field in this last space,
again with vanishing flux through $\partial M$,
has a vector potential in $C^{3,\alpha,1}(TM)$.
These two facts are proved in \cite{NvW}.

Returning to our problem,
since the $\delta$ trace of (\ref{pric}) is
  \begin{equation}
    \dot{\Ric}[\eta]^c_{\;\;c}=\sigma-S^c_{\;\;c}-2\chi^c_{\;\;;c}
  \end{equation}
(recalling that $u$ will be picked to solve $\Delta_\delta u=\sigma$),
we find that (\ref{pric}) is equivalent to
  \begin{equation}
  \label{prein}
    \dot{G}[\eta]_{ab}=u_{;ab}-\frac{1}{2}\sigma \delta_{ab} - \hat{S}_{ab}
      - \chi_{a;b} - \chi_{b;a} + \chi^c_{\;\;;c}\delta_{ab},
  \end{equation}
where $\hat{S}_{ab}=S_{ab}-\frac{1}{2}S^c_{\;\;c}\delta_{ab}$.
In light of the preceding discussion
we will be able to solve this equation for $\eta$
provided we can choose $\chi$ and $u$ to make the right-hand side
satisfy the integrability conditions (\ref{transverse}) and (\ref{equil})
(with $\hat{T}$ replaced by the right-hand side of (\ref{prein})
and with $\partial \Omega$ replaced with $\partial M$).
Actually for
$\chi^a \in C^{2,\alpha,2+\beta}(TM)$ and $u \in C^{3,\alpha,1+\beta}(M)$,
the divergence-free condition (\ref{transverse})
implies (\ref{equil}),
because in this case the right-hand side of (\ref{prein}) belongs to
$C^{1,\alpha,3+\beta}\left(T^*M^{\odot 2}\right)$,
so the rapid decay means that on large spheres in $M$ centered at the origin
the corresponding integrals in (\ref{equil}) tend to $0$,
and then the divergence theorem in conjunction with (\ref{transverse})
ensures (\ref{equil}) holds exactly on $\partial M$.

Taking the divergence of (\ref{prein})
(and again using the fact that we will choose $u$ solving $\Delta_\delta u=\sigma$,
we see we must find $\chi^a \in C^{2,\alpha,2+\beta}(TM)$ satisfying
  \begin{equation}
    \chi_{a;b}^{\;\;\;\;;b}=\frac{1}{2}\sigma_{;a}-\hat{S}_{ab}^{\;\;\;;b}.
  \end{equation}
Working relative to Cartesian coordinates
this system simply consists of three independent
Poisson equations for the components of $\chi$.
Lemma \ref{Pfd} in Appendix \ref{Peqn} provides the existence
of a solution satisfying (\ref{chiest}).
Using the same lemma we also find $u$ solving $\Delta_\delta u=\sigma$
and satisfying
$\norm{u}_{3,\alpha,1+\beta} \leq C\norm{\sigma}_{1,\alpha,3+\beta}$.
Finally we can solve (\ref{prein}) for $\eta_{ab}$
as explained above,
using the proof of (\cite{Carlson})
supplemented with the results on vector potentials from (\cite{NvW});
the last reference also guarantees the estimates asserted for $\eta$
in terms of the data.
\end{proof}

The next lemma, whose second item will ultimately furnish our mass estimate,
incorporates the boundary data.
The heart of the proof is the recognition that, at the linear level,
the static condition can be maintained while making prescribed
perturbations to the boundary data through harmonic conformal transformations
and diffeomorphisms from $M$ to subsets of $\R^3$.

\begin{lemma}
\label{linsol}
Given $\alpha, \beta \in (0,1)$, there exists a bounded linear map
    \begin{equation}
    \begin{aligned}
    \Ucal:
        &C^{1,\alpha,3+\beta}\left(T^*M^{\odot 2}\right) 
        \oplus C^{1,\alpha,3+\beta}(M) 
        \oplus C^{3,\alpha}\left(T^*\partial M^{\odot 2}\right) 
        \oplus C^{2,\alpha}(\partial M) \\
    &\to
        C^{3,\alpha,1}\left(T^*M^{\odot 2}\right)
        \oplus C^{3,\alpha,1}(M)
        \oplus C^{2,\alpha,2+\beta}(TM)
    \end{aligned}
  \end{equation}
such that if
  \begin{equation}
    \begin{bmatrix} \eta_{ab} \\ u \\ \chi^a \end{bmatrix}
    =
    \Ucal \begin{bmatrix} S_{ab} \\ \sigma \\ \omega_{\mu \nu} \\ \kappa \end{bmatrix},
  \end{equation}
then (i)
  \begin{equation}
  \label{lineqn}
    \Sdot \begin{bmatrix} \eta_{ab} \\ u \end{bmatrix}
    =
    \begin{bmatrix} 
      S_{ab} + \chi_{a;b} + \chi_{b;a} \\ \sigma
    \end{bmatrix},
    \quad
    \Bdot[\eta_{ab}]
    =
    \begin{bmatrix} \omega_{\mu \nu} \\ \kappa \end{bmatrix},
    \quad \mbox{and }
    \norm{\chi}_{2,\alpha,2+\beta} \leq C\norm{\Delta_\delta \chi}_{0,\alpha,4+\beta},
  \end{equation}
where $C$ is a constant depending on only $\alpha$ and $\beta$,
and

(ii) in the special case that $S_{ab}=0$ and $\sigma=0$,
we have $\eta_{ab}=-2u \delta_{ab}$ outside a compact set
and moreover
  \begin{equation}
  \label{uavg}
    \int_{\partial M} u
    =
    \frac{1}{4} \int_{\partial M} \omega_{\mu}^{\;\; \mu} - \frac{1}{2} \int_{\partial M} \kappa, 
  \end{equation}
where $\omega$ is contracted via $\iota^*\delta$ and
the integration measure is given by
the standard area form on $S^2=\partial M$ (for a total area of $4\pi$),
induced by $\iota^*\delta$.
\end{lemma}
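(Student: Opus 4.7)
The plan is to construct $\Ucal$ in two stages: first absorb the interior source via Lemma \ref{inhomlinsol}, then correct the induced boundary values using homogeneous solutions of the linearized static system on flat $\R^3$. For the first stage set $(\eta^{(0)},u^{(0)},\chi) := \Ucal_{\Scal}[S,\sigma]$, so (\ref{chiest}) and the source equation of (\ref{ihlineqn}) are automatically satisfied; put $\omega' := \omega - \iota^*\eta^{(0)}$ and $\kappa' := \kappa - \dot{\Hcal}_\iota[\eta^{(0)}]$.

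For the second stage I exploit two elementary families in $\ker \Sdot$ over the flat background. The \emph{harmonic-conformal} pair $(-2v\delta,v)$ with $v$ $\delta$-harmonic lies in $\ker\Sdot$ because $\dot{\Ric}[-2v\delta] = D_\delta^2 v + (\Delta_\delta v)\delta$ reduces to $D_\delta^2 v$ under harmonicity and thereby cancels the $D_\delta^2 u$ term; the \emph{infinitesimal-diffeomorphism} pair $(\mathcal{L}_\xi\delta,0)$ lies in $\ker\Sdot$ for any vector field $\xi$ because $\Ric[\delta] = 0$ and $\Ric$ is diffeomorphism-invariant. I therefore seek the correction in the form
\[
(\eta',u') = (-2v\delta + \mathcal{L}_\xi\delta,\ v)
\]
with $v \in C^{3,\alpha,1}(M)$ $\delta$-harmonic and $\xi \in C^{3,\alpha}(TM)$ supported in a fixed compact subset of $M$; compact support of $\xi$ forces $\eta' = -2u'\delta$ outside $\supp\xi$, which is the structural half of item (ii). Finally set $(\eta,u,\chi) := (\eta^{(0)}+\eta',\,u^{(0)}+u',\,\chi)$.

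The boundary matching $\Bdot[\eta'] = (\omega',\kappa')$ becomes an elliptic system on $\partial M$. Writing $\xi|_{\partial M} = \xi^T + f\nu$ with $\nu = -x/\abs{x}$, and using $\Hcal_\iota[\delta] = -2$ and the Weingarten relation $\nabla_X\nu = -X$ on the unit sphere, one computes
\begin{align*}
\iota^*\eta' &= -2(v_0 + f)\iota^*\delta + \mathcal{L}_{\xi^T}(\iota^*\delta),\\
\dot{\Hcal}_\iota[\eta'] &= -2v_0 - 2\Lambda(v_0) - \Delta_{S^2}f - 2f,
\end{align*}
where $v_0 := v|_{\partial M}$ and $\Lambda$ is the Dirichlet-to-Neumann operator for $\Delta_\delta$ on $M$ (acting on $\Y$ by the eigenvalue $\ell+1$). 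Decomposing in spherical harmonics decouples matching mode by mode: the trace-free metric equation is $K\xi^T = \omega'|_{\mathrm{tf}}$ with $K$ the conformal Killing operator on $(S^2,\iota^*\delta)$, which has six-dimensional kernel (the conformal Killing fields, all supported at $\ell = 1$) and, by an index count on the round $2$-sphere, trivial cokernel; I fix a bounded right-inverse by requiring $\xi^T$ to be $L^2$-orthogonal to $\ker K$, which confines $\xi^T$ to modes $\ell \geq 2$ and makes $\mathrm{div}_{S^2}\xi^T$ vanish on the $\ell \in \{0,1\}$ modes. The remaining trace and mean-curvature equations at each mode $\ell$ form a $2\times 2$ system for $(v_\ell,f_\ell)$ whose determinant is $(\ell+1)(\ell+2) \neq 0$, giving unique and bounded solvability. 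Standard elliptic estimates for $K$ on $S^2$ and for the exterior Dirichlet problem on $M$ (producing $v \in C^{3,\alpha,1}(M)$ from $v_0 \in C^{3,\alpha}(\partial M)$), together with a fixed bounded-linear compactly supported extension of $\xi^T + f\nu$ into $M$, yield the bounded linearity of $\Ucal$.

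The identity (\ref{uavg}) drops out of the $\ell = 0$ mode. In the homogeneous case $(S,\sigma) = (0,0)$ of item (ii) we have $u = v$ and $(\omega',\kappa') = (\omega,\kappa)$; on $\ell = 0$ the contributions of $\xi^T$ and $\Delta_{S^2}f$ both vanish while $\Lambda = 1$, so the system collapses to
\[
-4(v_0 + f)|_{\ell = 0} = (\omega_\mu{}^\mu)|_{\ell = 0},\qquad -4 v_0|_{\ell = 0} - 2f|_{\ell = 0} = \kappa|_{\ell = 0}.
\]
Eliminating $f$ gives $v_0|_{\ell = 0} = \tfrac{1}{4}(\omega_\mu{}^\mu)|_{\ell=0} - \tfrac{1}{2}\kappa|_{\ell=0}$, which on multiplication by the area $4\pi$ yields (\ref{uavg}) because only the $\ell=0$ mode contributes to the sphere integral. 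The main obstacle I anticipate is the careful bounded-linear solvability analysis at the low $\ell$ modes, where the six-dimensional conformal Killing gauge must be pinned down compatibly with the weighted H\"{o}lder scale.
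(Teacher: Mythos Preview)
Your approach coincides with the paper's: apply $\Ucal_{\Scal}$ to absorb the interior source, then realize the remaining boundary correction as a compactly supported Lie derivative of $\delta$ plus a decaying harmonic conformal perturbation, solving the resulting scalar system mode by mode on $S^2$ (your determinant $(\ell+1)(\ell+2)$ matches the paper's matrix in (\ref{bdymodes})) and reading off (\ref{uavg}) from the $\ell=0$ row. One small bookkeeping slip: you need $\xi\in C^{4,\alpha}$, not $C^{3,\alpha}$, so that $\mathcal{L}_\xi\delta$ lands in $C^{3,\alpha,1}$; the paper devotes some care (via an integral representation using the interior Poisson operator $P_r$) to verifying the H\"older gains from the multipliers $1/(\ell+1)$ and $1/[(\ell+1)(\ell+2)]$ that you are treating as ``standard.''
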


\begin{proof}

\textbf{(i)} Let $S \in C^{1,\alpha,3+\beta}\left(T^*M^{\odot 2}\right)$,
$\sigma \in C^{1,\alpha,3+\beta}(M)$,
$\omega \in C^{3,\alpha}\left(T^*\partial M^{\odot 2}\right)$,
and $\kappa \in C^{2,\alpha}(\partial M)$.
Appealing to Lemma \ref{inhomlinsol}
we define $\etilde_{ab}$, $w$, and $\chi_a$ by
  \begin{equation}
  \label{ewchdef}
    \begin{bmatrix} \etilde_{ab} \\ w \\ \chi^a \end{bmatrix}
    =
    \Ucal_{\Scal}\begin{bmatrix} S_{ab} \\ \sigma \end{bmatrix},
  \end{equation}
thereby securing
  \begin{equation}
  \label{sofar}
    \Sdot \begin{bmatrix} \etilde \\ w \end{bmatrix}
    =
    \begin{bmatrix} S_{ab} + \chi_{a;b} + \chi_{b;a} \\ \sigma \end{bmatrix},
  \end{equation}
with $\norm{w}_{3,\alpha,1}$ and $\norm{\etilde}_{3,\alpha,1}$
controlled by the data,
but it remains to enforce the boundary conditions.

To this end we will further alter the metric by infinitesimal diffeomorphism
of the form $\xi_{a;b}+\xi_{b;a}$
with $\xi^a$ a compactly supported vector field,
which of course does not affect the linearized Ricci tensor,
so preserves equation (\ref{sofar}),
but can be exploited to adjust the boundary data.
Fields tangential to $\partial M$ represent pure gauge changes,
which are nevertheless useful for reducing to the case of 
boundary metric conformally round to first order (infinitesimal uniformization),
while normal fields genuinely deform the boundary geometry.

In fact we restrict attention to vector fields of the form
  \begin{equation}
    \xi=\psi \xi^\perp \partial_r + \psi \xi^\top,
  \end{equation}
where $\xi^\perp$ is a (scalar-valued) function on $\partial M$,
$\xi^\top$ is a radially constant
($\delta$-parallel along rays through the origin)
vector field tangential to $\partial M$, 
and $\psi$ is a bump function
identically $1$ on $\{\abs{x} \leq 2 \}$
and identically $0$ on $\{\abs{x} \geq 3\}$.
Thus we consider variations of $\partial M$
arising as reparametrizations generated by $\xi^\top$
and as graphs generated by $\xi^\perp$.
To wit, writing $L$ for the Lie derivative, we have
  \begin{equation}
    \label{Bvec}
    \Bdot[\xi_{a;b}+\xi_{b;a}]
    =
    \begin{bmatrix}
      L_{\xi^\top}\iota^*\delta + 2\xi^\perp \iota^*\delta \\
      \left(\Delta_{\iota^*\delta}+2\right) \xi^\perp
    \end{bmatrix},
  \end{equation}
as can be verified either by direct calculation of the variation of the induced metric
and mean curvature under variations of the ambient metric
(included in Appendix \ref{varapp} for reference)
or by using the possibly more familiar formulas
for the variation of these quantities under variations of the immersion,
after applying the tautologies
$\iota^* \phi_t^*\delta = (\phi_t \circ \iota)^*\delta$
and $\Hcal_\iota[\phi_t^*\delta] = \Hcal_{\phi_t \circ \iota}[\delta]$
for the diffeomorphisms $\phi_t: M \to \R^3$ generated by $\xi$.

Of course variations by vector fields alone will not span the
tangent space of boundary data at $\delta$, and so
we will additionally avail ourselves of linearized conformal transformations generated
by harmonic (relative to the Euclidean metric $\delta$) functions vanishing at infinity.
For $v$ satisfying $\Delta_\delta v=0$ we get
  \begin{equation}
    \dot{\Ric}[v\delta]=-\frac{1}{2}D_\delta^2 v,
  \end{equation}
so we may simultaneously replace $\etilde$ and $w$ in (\ref{sofar})
by $\etilde + v\delta$ and $w-\frac{1}{2}v$ respectively,
again without altering the right-hand side.
As for the boundary data, the change of the induced metric is obvious
and the change of mean curvature is easy to calculate
(as shown in Appendix \ref{varapp}):
  \begin{equation}
    \label{Bconf}
    \Bdot[v\delta]=\begin{bmatrix} v\iota^*\delta \\ v - v_{,r} \end{bmatrix}.
  \end{equation}

We will soon see that
these two types of modifications will suffice to prescribe the boundary data,
and accordingly we seek $\xi^a$ and $v$ as just described
so that
  \begin{equation}
  \label{bdycon}
    \Bdot[\etilde_{ab} + \xi_{a;b}+\xi_{b;a} + v\delta_{ab}]
    =
    \begin{bmatrix} \omega_{\mu \nu} \\ \kappa \end{bmatrix}.
  \end{equation}

We write
$\Bdot_1: C^{j,\alpha,\beta}\left(T^*M^{\odot 2}\right)
  \to C^{j,\alpha}\left(T^*\partial M^{\odot 2}\right)$
and 
$\Bdot_2: C^{j+1,\alpha,\beta}\left(T^*M^{\odot 2}\right)
  \to C^{j,\alpha}(\partial M)$
for the first (metric) and second (mean curvature) components respectively of $\Bdot$.
Since (see for example Lectures 1 and 3 of \cite{Via})
every symmetric $2$-tensor on $S^2$ may be written as the sum
of a Lie derivative of $\iota^*\delta$
and a function times $\iota^*\delta$,
there exist $h \in C^{3,\alpha}(\partial M)$ and $W \in C^{4,\alpha}(T\partial M)$
such that
  \begin{equation}
  \label{hWdef}
    \omega - \Bdot_1[\etilde]  = h \iota^*\delta + L_{_W} \iota^*\delta.
  \end{equation}
We also define $k \in C^{2,\alpha}(\partial M)$ by
  \begin{equation}
  \label{kdef}
    \kappa-\Bdot_2[\etilde] = k.
  \end{equation}
Setting $\xi^\top=W$ and referring to equations (\ref{Bvec}) and (\ref{Bconf}), equation (\ref{bdycon}) becomes the system
  \begin{equation}
    \label{scalarsys}
    \begin{aligned}
      &\iota^*v+2\xi^\perp=h \\
      &\iota^*(v-v_{,r}) + \left(\Delta_{\iota^*\delta}+2\right)\xi^\perp=k.
    \end{aligned}
  \end{equation}

For each nonnegative integer $\ell$ and for each integer $m \in [-\ell,\ell]$
we let $\Y: S^2 \to \C$ be a spherical harmonic satisfying
$\Delta_{\iota^*\delta} \Y(\theta,\phi) = -\ell(\ell+1)\Y(\theta,\phi)$
and $-i \partial_\theta \Y(\theta,\phi) = mY(\theta,\phi)$ (where $\theta$ is the azimuth angle)
and chosen so that 
$\bigcup_ {\ell=0}^\infty \left\{ \Y \right\}_{m=-\ell}^\ell$
is an orthonormal basis for $L^2 \left(S^2,\iota^*\delta\right)$.

Introducing the coefficients defined by the expansions 
\begin{equation}
\label{sphdecomp}
  \begin{aligned}
    &h(\theta,\phi)=\sum_{\ell=0}^\infty \sum_{m=-\ell}^\ell \shc{h} \Y(\theta,\phi), \\
    &k(\theta,\phi)=\sum_{\ell,m} \shc{k} \Y(\theta,\phi), \\
    &v(r,\theta,\phi)=\sum_{\ell,m} \shc{v} r^{-\ell-1}\Y(\theta,\phi), \mbox{ and} \\
    &\xi^\perp(r,\theta,\phi)=\sum_{\ell,m} \shc{\xi} \Y(\theta,\phi),
  \end{aligned}
  \end{equation}
we find that $v$ and $\xi^\perp$ solve our system (\ref{scalarsys}) if and only if
  \begin{equation}
  \label{bdymodes}
    \begin{bmatrix} 1 & 2 \\ \ell + 2 & -(\ell-1)(\ell+2) \end{bmatrix}
      \begin{bmatrix} \shc{v} \\ \shc{\xi} \end{bmatrix}
    =
    \begin{bmatrix} \shc{h} \\ \shc{k} \end{bmatrix}    
  \end{equation}
for each nonnegative integer $\ell$ and for each integer $m \in [-\ell,\ell]$.
Evidently for each such $\ell$ the matrix in equation (\ref{bdymodes}) is invertible,
so we get the unique solution
  \begin{equation}
  \label{fourier}
    \begin{bmatrix} \shc{v} \\ \shc{\xi} \end{bmatrix}
    =
    \begin{bmatrix}
      \frac{\ell-1}{\ell+1} & \frac{2}{(\ell+1)(\ell+2)} \\
      \frac{1}{\ell+1} & \frac{-1}{(\ell+1)(\ell+2)}
    \end{bmatrix}
    \begin{bmatrix} \shc{h} \\ \shc{k} \end{bmatrix}.
  \end{equation}
Through (\ref{sphdecomp})
these coefficients define distributional solutions
$v$, $\xi^\perp$
to (\ref{scalarsys}).
H\"{o}lder estimates can be obtained
from integral representation of $v$ and $\xi^\perp$
as follows.

First, given $f \in C^{2,\alpha}(S^2,\iota^*\delta)$,
we write $P[f]$ for the unique harmonic function on the
closed unit ball (the closure of $\R^3 \backslash M$)
agreeing with $f$ on $\partial M$.
For each $r \in [0,1)$ we also define the function
$P_r[f]$ on $S^2$ by $P_r[f](\vec{u})=P[f](r\vec{u})$.
In particular
  \begin{equation}
  \label{Prbasis}
  P_r\left[Y_{\ell,m}\right]=r^\ell Y_{\ell,m}.
  \end{equation}
Then from (\ref{fourier}) we deduce that
  \begin{equation}
  \label{coeffPr}
  \begin{aligned}
    \shc{v}
    &=
    \frac{
        \left\langle
        \Y, \, 2k-(\Delta_{\iota^*\delta}+2)h
        \right\rangle}
      {(\ell+1)(\ell+2)} \\
    &=
    -\left\langle
    \int_0^1 \int_0^s P_r\left[\left(\Delta_{\iota*\delta}+2\right)\Y\right] \, dr \, ds,
      \, h
    \right\rangle
    +
    \left\langle
      \int_0^1 \int_0^s P_r\left[Y_{\ell,m}\right] \, dr \, ds,
       \, 2k
      \right\rangle \mbox{ and}  \\
    \shc{\xi}
    &=
    \frac{\left\langle \Y, \, h \right\rangle}{\ell+1}
    - \frac{\left\langle \Y, \, k \right\rangle}{(\ell+1)(\ell+2)}
    =
    \left\langle \int_0^1 P_r[\Y] \, dr , \, h \right\rangle
      - \left\langle \int_0^1 \int_0^s P_r[\Y] \, dr \, ds, \, k \right\rangle,
  \end{aligned}
  \end{equation}
where the angled brackets denote the $L^2\left(S^2,\iota^*\delta\right)$
inner product (which we take to be conjugate-linear in the left-hand factor).

From (\ref{Prbasis}) we see that $P_r$
extends to a self-adjoint map on $L^2(S^2)$,
so in turn the maps
$f \mapsto \int_0^1 P_r[f] \, dr$
and $f \mapsto \int_0^1 \int_0^s P_r[f] \, dr \, ds$
are also self-adjoint.
Additionally we have
$\left\langle \Delta_{\iota*\delta} \Y, f \right \rangle=
\left\langle \Y, \Delta_{\iota^*\delta}f \right\rangle$
whenever $f \in C^2(S^2)$,
and note that from standard elliptic regularity theory
we have
  \begin{equation}
  \label{stdreg}
    \norm{P[f]}_{j,\alpha} \leq C(j,\alpha) \norm{f}_{j,\alpha}.
  \end{equation}
Since $h \in C^{3,\alpha}(S^2)$ and $k \in C^{2,\alpha}(S^2)$,
we conclude using (\ref{coeffPr}) and (\ref{sphdecomp}),
that $v$ and $\xi^\perp$
are both at least $C^{1,\alpha}$ functions representable as
  \begin{equation}
  \label{vxirep}
  \begin{aligned}
    &v=P^{ext}\left[\int_0^1 \int_0^s
      P_r[2k]-(\Delta_{\iota^*\delta}+2)P_r[h] \, dr \, ds\right]
    \mbox{ and} \\
    &\xi^\perp=\int_0^1 P_r[h] \, dr - \int_0^1 \int_0^s P_r[k] \, dr \, ds,
  \end{aligned}
  \end{equation}
where $P^{ext}[f]$ is the exterior harmonic extension of $f \in C^0(\partial M)$,
that is the unique bounded function on $M$
harmonic on the interior and agreeing with $f$ on the boundary.

To establish the higher regularity with estimates as asserted
in the statement of the lemma
we need better bounds for $P_r[f]$
than (\ref{stdreg}),
which we will obtain by a slight variant of a standard derivation
of interior gradient estimates for harmonic functions,
as follows.
Let $\vec{x}$ belong to the unit ball in $\R^3$ centered at $0$
and suppose $f \in C^{2,\alpha}(S^2)$.
Since $P[f]$ is harmonic, each of its coordinate partial derivatives
$\partial_iP[f]$ is too, so by the mean value property
and the divergence theorem
  \begin{equation}
  \label{C1est}
    \begin{aligned}
      \abs{\partial_iP[f](\vec{x})}
      &=
      \abs{\frac{6}{\pi(1-\abs{\vec{x}})^3}
        \int_{\abs{\vec{y}-\vec{x}} \leq \frac{1}{2}(1-\abs{\vec{x}})}
          \partial_{y^i}\left(P[f](\vec{y})-P[f]\left(\frac{1+\abs{\vec{x}}}{2}\vec{x}\right)\right)
          \, d^3y} \\
      &\leq
      2^{-\alpha/2} \cdot 12\norm{P[f]}_{C^{0,\alpha}
       (S\left[\vec{x},\frac{1}{2}(1-\abs{\vec{x}})\right])}(1-\abs{\vec{x}})^{\alpha-1}
        \int_0^\pi (1-\cos \phi)^{\alpha/2} \sin \phi \, d\phi \\
      &=
      \frac{48}{2+\alpha}\norm{P[f]}_{C^{0,\alpha}
        (S\left[\vec{x},\frac{1}{2}(1-\abs{\vec{x}})\right])}
        (1-\abs{\vec{x}})^{\alpha-1},
    \end{aligned}
  \end{equation}
where $S[\vec{v},r]$ is the sphere with center $\vec{v}$ and radius $r$.

Next, for any two points $\vec{y},\vec{z} \in S\left[\vec{x},\frac{1}{2}(1-\abs{\vec{x}})\right]$,
again using the mean value property and the divergence theorem,
  \begin{equation}
  \label{C1alphaest}
    \begin{aligned}
      \abs{\partial_iP[f](\vec{y})-\partial_i[f](\vec{z})}
      &=
      \abs{\frac{6}{\pi(1-\abs{\vec{x}})^3}
        \int_{\abs{\vec{w}-\vec{y}} \leq \frac{1}{2}(1-\abs{\vec{x}})}
          \partial_{w^i}\left(P[f](\vec{w})-P[f](\vec{w}+\vec{z}-\vec{y})\right) \, d^3w} \\
      &\leq
      24(1-\abs{\vec{x}})^{-1}\norm{P[f]}_{C^{0,\alpha}(S[\vec{x},1-\abs{\vec{x}}])}
        \abs{\vec{w}-\vec{z}}^\alpha.
    \end{aligned}
  \end{equation}
Now a simple inductive argument using
(\ref{C1est}), (\ref{C1alphaest}), and (\ref{stdreg}) yields
  \begin{equation}
    \norm{P_r[f]}_{j+k,\alpha} \leq C(j,k,\alpha)(1-r)^{\alpha-k}\norm{f}_{j,\alpha},
  \end{equation}
whence it follows immediately that
  \begin{equation}
    \norm{\int_0^1 P_r[f] \, dr}_{j+1,\alpha} +
    \norm{\int_0^1 \int_0^s P_r[f] \, dr \, ds}_{j+2,\alpha} \leq C(\alpha,j)\norm{f}_{j,\alpha},
  \end{equation}
which applied to (\ref{vxirep}) in conjunction with the classical estimate
$\norm{P^{ext}[f]}_{j,\alpha,1} \leq C(j)\norm{f}_{j,\alpha}$
ensures that for some $C>0$
  \begin{equation}
    \norm{v}_{3,\alpha,1}+\norm{\xi^\perp}_{4,\alpha}
    \leq
    C\left(\norm{h}_{3,\alpha}+\norm{k}_{2,\alpha}\right)
  \end{equation}
as needed.

The proof of (i) is now complete, with the solution operator defined by
  \begin{equation}
    \label{Udef}
    \Ucal \begin{bmatrix} S_{ab} \\ \sigma \\ \omega_{\mu \nu} \\ \kappa \end{bmatrix}
    =
    \begin{bmatrix} \etilde_{ab}+\xi_{a;b}+\xi_{b;a}+v\delta_{ab} \\ w-\frac{1}{2}v \\ \chi^a \end{bmatrix}.
  \end{equation}

\textbf{(ii)} We first observe that when $S_{ab}$ and $\sigma$ vanish identically,
so do $\chi^a$, $w$, and $\etilde$
(by virtue of (\ref{ewchdef})
and the boundedness of $\Ucal_{\Scal}$),
so that equations (\ref{hWdef}) and (\ref{kdef}) read simply
$\omega_{\mu \nu}=h\left(\iota^*\delta\right)_{\mu \nu}+W_{\mu:\nu}+W_{\nu:\mu}$ and $\kappa=k$,
with each colon indicating covariant differentiation according to $\iota^*\delta$.
Thus in particular $\omega_{\mu}^{\;\; \mu}=2h+2W_{\mu}^{\;\; :\mu}$,
so $\int_{S^2} \omega_{\mu}^{\;\; \mu}=2\int_{S^2} h$
and of course $\int_{S^2} \kappa = \int_{S^2} k$.
Referring to the linearized potential $u=0-\frac{1}{2}v$ appearing in (\ref{Udef})
and to the top row of (\ref{fourier}) at $\ell=m=0$, we finish with
  \begin{equation}
    \int_{S^2} u = -\frac{1}{2} \int_{S^2} v = \frac{1}{2} \int_{S^2} h - \frac{1}{2} \int_{S^2} k
      =
      \frac{1}{4}\int_{S^2} \omega_{\mu}^{\;\; \mu} - \frac{1}{2} \int_{S^2} \kappa
  \end{equation}
as claimed.

\end{proof}

Taking $S=0$, $\sigma=0$, $\omega=\gamma-\iota^*\delta$, and $\kappa=H+2$ in the lemma,
from $\Ucal$ we obtain $\eta_{ab}$ and $u$ (and $\chi=0$) 
solving our extension problem to first order:
  \begin{equation}
    \label{firstordersol}
    \Sdot \begin{bmatrix} \eta \\ u \end{bmatrix} = \begin{bmatrix} 0 \\ 0 \end{bmatrix}
    \mbox{ and }
    \Bdot[\eta] = \begin{bmatrix} \gamma-\iota^* \delta \\ H+2 \end{bmatrix}.
  \end{equation}
The proof of the next
proposition provides the higher-order corrections by using the contraction mapping lemma.
Note that this first-order application
of (\ref{linsol}) requires no $\chi^a$ to adjust the source terms,
but subsequent applications, aimed to cancel the nonlinear terms,
a priori may, spoiling the static condition.
To the contrary, 
an argument along the lines of Miao's proof of
his Reduction Lemma in \cite{MiaoStatExt} 
shows that for sufficiently small data $\chi^a$ vanishes identically,
leaving an exact solution.

As mentioned earlier, the existence assertion of the proposition below
was first proved by Miao in \cite{MiaoStatExt}, with a symmetry condition on the data,
while the general existence and uniqueness statements were established by Anderson
in \cite{AndLoc};
the analyticity, relative to harmonic local coordinates, of static vacuum metrics
was proved by M\"{u}ller zum Hagen in \cite{MzH}.
The novelty of the proposition is its estimate of the extension's ADM mass,
which is made by referring to item (ii) of the lemma
to compute the mass of the linearized solution
and by using the estimate for the nonlinear corrections obtained
in the course of constructing the exact solution.

\begin{prop}
\label{statext}
Let $\alpha \in (0,1)$.
There exist $C,\epsilon>0$ such that whenever
$\norm{\gamma_{\mu \nu}-(\iota^*\delta)_{\mu \nu}}_{3,\alpha}+\norm{H+2}_{2,\alpha}<\epsilon$,
(i) there is a $C^{3,\alpha}$ asymptotically flat Riemannian metric $g$ on $M$
and there is a $C^{3,\alpha}$ function $\Phi$ on $M$ solving
the static vacuum extension problem
  \begin{equation}
  \label{statvacext}
    \Scal \begin{bmatrix} g \\ \Phi \end{bmatrix}
    =
    \begin{bmatrix} 0 \\ 0 \end{bmatrix}
    \mbox{ and }
    \Bcal[g]
    =
    \begin{bmatrix} \gamma_{\mu \nu} \\ H \end{bmatrix},
  \end{equation}
with
  \begin{equation}
    \norm{g-\delta}_{3,\alpha,1}+\norm{\Phi-1}_{3,\alpha,1}
    \leq
    C\norm{\gamma-\iota^*\delta}_{3,\alpha}+C\norm{H+2}_{2,\alpha};
  \end{equation}
(ii) there is a neighborhood of $(\delta,1)$ in 
$C^{3,\alpha,\beta}(T^*M^{\odot 2}) \times C^{3,\alpha,\beta}(M)$
within which any two solutions of (\ref{statvacext}) are diffeomorphic;
(iii) there are global harmonic coordinates for the interior of $(M,g)$
with respect to which $g$ and $\Phi$ are analytic;
(iv) $(g,\Phi)$ admits no closed minimal surfaces
and $\partial M$ is outer-minimizing;
and (v) the ADM mass $\madm[g]$ of the extension $g$ satisfies the estimate
  \begin{equation}
    \madm[g] = \frac{1}{16\pi} \int_{\partial M} \left( 6 + 2H - \gamma_{\mu}^{\;\; \mu} \right)
      + O(\norm{\gamma-\iota^*\delta}_{3,\alpha}^2)+O(\norm{H+2}_{2,\alpha}^2).
  \end{equation}
\end{prop}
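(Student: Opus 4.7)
The plan is to construct $(g,\Phi)=(\delta+\eta,1+u)$ by Picard iteration based on the linearized solution operator $\Ucal$ of Lemma~\ref{linsol}. Write
\begin{equation*}
\Scal\begin{bmatrix}\delta+\eta\\1+u\end{bmatrix}
=\Sdot\begin{bmatrix}\eta\\u\end{bmatrix}+Q_\Scal[\eta,u],
\qquad
\Bcal[\delta+\eta]=\Bdot[\eta]+Q_\Bcal[\eta],
\end{equation*}
where the remainders $Q_\Scal$ and $Q_\Bcal$ are real-analytic in their arguments and vanish to second order at the origin. Then the extension problem (\ref{statvacext}) becomes the fixed-point equation
\begin{equation*}
\begin{bmatrix}\eta\\u\\\chi\end{bmatrix}
=
\Ucal\left(-Q_\Scal[\eta,u],\;\gamma-\iota^*\delta-Q_\Bcal^{(1)}[\eta],\;H+2-Q_\Bcal^{(2)}[\eta]\right),
\end{equation*}
subject to the side constraint $\chi\equiv 0$ (the superscripts indexing the metric and mean-curvature components of $Q_\Bcal$). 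The zeroth iterate is the linearized solution (\ref{firstordersol}), which has $\chi\equiv 0$ automatically since both source slots are zero. For $\eta,u\in C^{3,\alpha,1}$ the remainder $Q_\Scal[\eta,u]$ decays at worst as $1/r^4$ and so lies in $C^{1,\alpha,3+\beta}$; by boundedness of $\Ucal$ together with the quadratic structure of $Q_\Scal$ and $Q_\Bcal$, a standard contraction argument on a small ball of $C^{3,\alpha,1}(T^*M^{\odot 2})\oplus C^{3,\alpha,1}(M)$ produces a unique fixed point satisfying the norm estimate of (i).

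The main technical obstacle is to show that the $\chi^a$ component of this fixed point vanishes identically; without this, $(g,\Phi)$ satisfies the static vacuum equations only modulo the Lie-derivative defect $L_\chi\delta$. Following the strategy behind Miao's Reduction Lemma \cite{MiaoStatExt}, I would apply the twice-contracted Bianchi identity to the actual nonlinear Einstein tensor of $g=\delta+\eta$ (which is divergence-free for any Riemannian metric), combined with the scalar equation $\Delta_\delta u=-Q_\Scal^{(2)}[\eta,u]$, to derive a second-order linear elliptic equation for $\chi^a$ whose coefficients are small in $C^{0,\alpha}$-norm when the data is small. The decay $\chi\in C^{2,\alpha,2+\beta}(TM)$ guaranteed by Lemma~\ref{linsol} together with this smallness then forces $\chi\equiv 0$. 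The remaining items follow: (ii) from local uniqueness in the contraction (consistent with Anderson's theorem \cite{AndLoc}); (iii) from the analyticity theorem of M\"uller zum Hagen \cite{MzH}; and (iv) by $C^{3,\alpha,1}$-perturbation of the facts that $\partial M$ is strictly outer-minimizing in $(M,\delta)$ and that this region admits no closed minimal surfaces.

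For the ADM mass estimate (v), by item (ii) of Lemma~\ref{linsol} the linearized solution $(\eta_0,u_0)$ from (\ref{firstordersol}) satisfies $(\eta_0)_{ab}=-2u_0\,\delta_{ab}$ outside a compact set, so there $g_0=(1-2u_0)\delta$ is conformally flat with $u_0$ harmonic and decaying like $\frac{1}{4\pi r}\int_{\partial M}u_0$. A direct computation of the ADM integrand then yields
\begin{equation*}
\madm[\delta+\eta_0]=-\frac{1}{4\pi}\int_{\partial M}u_0
=\frac{1}{16\pi}\int_{\partial M}\left(6+2H-\gamma_\mu^{\;\;\mu}\right),
\end{equation*}
the second equality from (\ref{uavg}) applied with $\omega=\gamma-\iota^*\delta$ and $\kappa=H+2$. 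Since the ADM integrand is linear in the metric perturbation, $\madm[\delta+\eta]-\madm[\delta+\eta_0]$ equals the same linear functional evaluated on the correction $\eta-\eta_0$, which the contraction estimate shows has $C^{3,\alpha,1}$-norm of quadratic order. An integration-by-parts bound for this functional, with the bulk integrand $\dot R[\eta-\eta_0]$ controlled in $L^1(M)$ by the quadratic source $Q_\Scal$, shows that this correction to the mass is itself $O((\|\gamma-\iota^*\delta\|_{3,\alpha}+\|H+2\|_{2,\alpha})^2)$, completing the estimate asserted in (v).
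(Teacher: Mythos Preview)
Your proposal is correct and follows essentially the same route as the paper: a contraction-mapping construction based on the linearized solution operator $\Ucal$, followed by the Bianchi-identity argument (in the spirit of Miao's Reduction Lemma) to force $\chi\equiv 0$, and then the mass estimate via item (ii) of Lemma~\ref{linsol}. The only noteworthy difference is in (v): the paper bounds the contribution of the correction $\theta=\eta-\eta_0$ to the ADM integral directly from $\norm{\theta}_{3,\alpha,1}\leq C(\norm{\gamma-\iota^*\delta}_{3,\alpha}+\norm{H+2}_{2,\alpha})^2$, whereas your integration-by-parts argument through $\dot R[\theta]\in L^1(M)$ is a valid but slightly longer path to the same bound; note also that the $\chi=0$ step hinges specifically on the a priori estimate $\norm{\chi}_{2,\alpha,2+\beta}\leq C\norm{\Delta_\delta\chi}_{0,\alpha,4+\beta}$ recorded in Lemma~\ref{linsol}, not merely on the decay class of $\chi$.
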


\begin{proof}
\textbf{(i)} 
We fix some $\beta \in (0,1)$ (so we can apply Lemma \ref{linsol}).
We seek a solution of the form
$g_{ab}=\delta_{ab}+\eta_{ab}+\theta_{ab}$ and $\Phi=1+u+v$, 
with $\eta$ and $u$ obtained from Lemma \ref{linsol} as in (\ref{firstordersol})
and $\theta_{ab}$ and $v$ to be determined.
We know that the $C^{3,\alpha,1}$ norms of $\eta$ and $u$
are bounded by $\norm{\mathcal{U}}$ times the norm of the data.
 
Now we need to eliminate the nonlinear errors defined by
  \begin{equation}
    \Qcal_{\Scal} \begin{bmatrix} \eta_{ab} \\ u \end{bmatrix}
    :=
    \Scal \begin{bmatrix} \delta + \eta \\ 1+u \end{bmatrix}
      - \Sdot \begin{bmatrix} \eta \\ u \end{bmatrix}
    \mbox{ and }
    \Qcal_{\Bcal}[\eta_{ab}]
    :=
    \Bcal[\delta + \eta] - \Bcal[\delta] - \Bdot[\eta].
  \end{equation}
that our first-order solutions introduce,
and accordingly we will make a standard application of the contraction mapping lemma
to secure a fixed point to the map
  \begin{equation}
    \label{contract}
    \begin{bmatrix} \theta_{ab} \\ v \end{bmatrix}
    \mapsto
    -\pi \Ucal 
      \begin{bmatrix}
        \Qcal_{\Scal}
          \begin{bmatrix} \eta_{ab}+\theta_{ab} \\ u+v \end{bmatrix} \\
        \Qcal_{\Bcal}[\eta_{ab}+\theta_{ab}]
      \end{bmatrix},
  \end{equation}
where $\pi$ projects onto the first two factors of the target of $\Ucal$
(forgetting the vector field $\chi$ used to modify the source $S$).

First we observe that the induced metric on $\partial M$
is independent of the potential $\Phi$
and linear in the ambient metric $g$,
while, for $g$ near $\delta$,
the mean curvature (being the divergence along $\partial M$
of the $g$-normalized vector field $g$-dual to the $1$-form $-dr$)
depends smoothly, as a function of some local coordinates on $\partial M$,
on the Cartesian components $g_{ij}$ of $g$
and on its first $g_{ij,k}$
partial derivatives.
Consequently we have
  \begin{equation}
    \norm{\Qcal_{\Bcal}[\eta_{ab}+\theta_{ab}]}_{2,\alpha}
    =
    \norm{\int_0^1 \int_0^t \frac{d^2}{ds^2} \Bcal[\delta+s(\eta_{ab}+\theta_{ab})] \, ds \, dt}_{2,\alpha}
    \leq
    C_1\norm{\eta+\theta}_{3,\alpha}^2 \mbox{ and }
  \end{equation}
  \begin{equation}
    \begin{aligned}
      \norm{\Qcal_{\Bcal}[\eta_{ab}+\theta_{ab}]-\Qcal_{\Bcal}[\eta_{ab}+\tilde{\theta}_{ab}]}_{2,\alpha}
      &=
      \norm{ \int_0^1 \int_0^t \int_0^1 \frac{\partial^3}{\partial s^2 \partial \tau}
        \Bcal[\delta+s(\eta+\tilde{\theta})+s\tau(\theta-\tilde{\theta})] \, d\tau \, ds \, dt}_{2,\alpha} \\
      &\leq
      C_1 \norm{\eta+\theta}_{3,\alpha} \norm{\theta-\tilde{\theta}}_{3,\alpha}
    \end{aligned}
  \end{equation}
for some $C_1>0$,
assuming $\norm{\theta-\tilde{\theta}}_{3,\alpha} \leq \norm{\eta+\theta}_{3,\alpha} < 1$.

We also observe, working in Cartesian coordinates,
that the components of $\Scal\left[\begin{smallmatrix}g \\ \Phi \end{smallmatrix}\right]$
are polynomial in $g_{ij}$, $g^{ij}$, $g_{ij,k}$, $g_{ij,k\ell}$,
$\Phi$, $\Phi_{,i}$, and $\Phi_{,ij}$,
so the components of $\Scal\left[\begin{smallmatrix}\delta+\eta \\ 1+u \end{smallmatrix}\right]$
depend analytically on $\eta_{ij}$,
$\eta_{ij,k}$, $\eta_{ij,k\ell}$,
$u$, $u_{,i}$, and $u_{,ij}$.
Moreover, each of the nonlinear terms in the corresponding power series
includes either (i) a product of two first derivatives of either $u$ or $\eta$
or (ii) a product of $u$ or $\eta$ with a second derivative of either $u$ or $\eta$.
Thus, estimating with the fundamental theorem of calculus as for $\Bcal$ above,
for some $C_2>0$ we have
  \begin{equation}
  \label{quaddecay}
    \norm{\Qcal_{\Scal}\begin{bmatrix} \eta_{ab}+\theta_{ab} 
      \\ u+v \end{bmatrix}}_{1,\alpha,4}
    \leq
    C_2 (\norm{\eta+\theta}_{3,\alpha,1}+\norm{u+v}_{3,\alpha,1})^2
  \end{equation}
and, assuming $\eta$, $u$, $\theta$, $\tilde{\theta}$, $v$, and $\tilde{v}$ sufficiently small,
  \begin{equation}
    \norm{\Qcal_{\Scal}\begin{bmatrix} \eta_{ab}+\theta_{ab} \\ u+v \end{bmatrix}
        - \Qcal_{\Scal}
          \begin{bmatrix} \eta_{ab}+\tilde{\theta}_{ab} 
            \\ u+\tilde{v} \end{bmatrix}}_{1,\alpha,4}
      \leq
      C_2 \left(\norm{\eta+\theta}_{3,\alpha,1}+\norm{u+v}_{3,\alpha,1}\right)
        \left(\norm{\theta-\tilde{\theta}}_{3,\alpha,1}+\norm{v-\tilde{v}}_{3,\alpha,1}\right).
  \end{equation}

Thus, by choosing $C_3>0$ sufficiently large and $\epsilon>0$
sufficiently small---both in terms of $C_1$, $C_2$, and the operator norm of $\Ucal$---and
by noting that $4>3\beta+1$, we see that
the map given by (\ref{contract}) is a contraction from
  \begin{equation}
  \label{domain}
    \left\{\norm{\theta_{ab}}_{3,\alpha,1}+\norm{v}_{3,\alpha,1}
    \leq 
    C_3\left( \norm{\gamma-\iota^*\delta}_{3,\alpha} + \norm{H+2}_{2,\alpha} \right)^2 \right\}
  \end{equation}
to itself.
By taking $(\theta_{ab},v)$ to be this map's unique fixed point and
  \begin{equation}
  \label{quadchi}
    \chi^a
    =
    \pi_3 \Ucal 
      \begin{bmatrix}
        \Qcal_{\Scal}
          \begin{bmatrix} \eta_{ab}+\theta_{ab} \\ u+v \end{bmatrix} \\
        \Qcal_{\Bcal}[\eta_{ab}+\theta_{ab}]
      \end{bmatrix},
  \end{equation}
where $\pi_3$ is projection onto the third factor of the target of $\Ucal$,
we obtain $(g,\Phi)$ solving
 \begin{equation}
    \Scal \begin{bmatrix} g_{ab} \\ \Phi \end{bmatrix}
    =
    \begin{bmatrix} \chi_{a;b}+\chi_{b;a} \\ 0 \end{bmatrix}
    \mbox{ and }
    \Bcal[g_{ab}]
    =
    \begin{bmatrix} \gamma_{ab} \\ H \end{bmatrix}.
  \end{equation}
Since $\theta$ is a component of a fixed point of (\ref{contract})
on the domain (\ref{domain}),
in particular we have
  \begin{equation}
  \label{thetaest}
    \norm{\theta_{ab}}_{3,\alpha,1}
    \leq 
    C_3\left( \norm{\gamma-\iota^*\delta}_{3,\alpha} + \norm{H+2}_{2,\alpha} \right)^2.
  \end{equation}

As planned we take $g=\delta+\eta+\theta$ and $\Phi=1+u+v$,
so that for $\epsilon$ sufficiently small
$g$ is a (nondegenerate) metric and $\Phi>0$.
Writing $R_{ab}$ and $R$ for the Ricci and scalar curvature of $g$,
we have now achieved
  \begin{equation}
  \label{almostsv}
  \begin{aligned}
    \Phi_{|ab} - \Phi R_{ab} = \chi_{a;b} + \chi_{b;a} \\
    g^{cd}\Phi_{|cd} = 0,
  \end{aligned}
  \end{equation}
where $g^{cd}$ denotes $g^{-1}$ (rather than contraction of $g$ with $\delta^{-1}$)
and each vertical bar
indicates covariant differentiation relative to $g$
(and each semicolon continues to indicate
covariant differentiation relative to $\delta$).
Mimicking \cite{MiaoStatExt},
we derive from (\ref{almostsv}) a linear elliptic system for $\chi$, with coefficients
depending on $g$ and $\Phi$.
For brevity we set
  \begin{equation}
    \tau_{ab}=\chi_{a;b}+\chi_{b;a} \in C^{1,\alpha,3+\beta}\left(T^*M^{\odot 2}\right),
  \end{equation}
the right-hand side of the first equation in (\ref{almostsv}).
Then, taking the $g$ trace of the first equation,
using the second, rearranging, and taking a derivative, we get
  \begin{equation}
    R_{|a}=-\Phi^{-1}g^{cd}\tau_{cd|a} + \Phi_{|a}\Phi^{-2}g^{cd}\tau_{cd},
  \end{equation}
while the $g$ divergence of the first gives
  \begin{equation}
    -\Phi R_{ab|c}g^{bc} = \tau_{ab|c}g^{bc}.
  \end{equation}

Combining these last two equations via the twice contracted Bianchi identity we arrive at
  \begin{equation}
    \tau_{ab|c}g^{bc}
      -\frac{1}{2}g^{cd}\tau_{cd|a}
      +\frac{1}{2}\left(\ln \Phi\right)_{|a}g^{cd}\tau_{cd}
    =
    0,
  \end{equation}
which we rewrite as
  \begin{equation}
    \chi_{a;b}^{\;\;\;\;\; ;b}
    =
    B^b\tau_{ab}+C^{cd}_{\;\;\;\; a}\tau_{cd}+D^{cd}\tau_{cd;a}
  \end{equation}
for tensor fields on $M$ satisfying 
$\norm{B}_{2,\alpha,2}+\norm{C}_{2,\alpha,2}+\norm{D}_{3,\alpha,1} \leq C_4 \epsilon$,
for some $C_4>0$ independent of $\epsilon$,
but the estimate for $\chi$ in Lemma \ref{linsol}
then implies the bound
  \begin{equation}
    \norm{\chi}_{2,\alpha,2+\beta} \leq C_5 \epsilon \norm{\chi}_{2,\alpha,2+\beta}
  \end{equation}
for some $C_5>0$ independent of $\epsilon$, which
for $\epsilon$ small enough forces $\chi=0$.
Thus $g$ is exactly static with nowhere vanishing potential $\Phi$.
The estimates of $g_{ab}$ and $\Phi$ follow from the above bounds
for $\eta_{ab}$, $u$, $\theta_{ab}$, and $v$, completing the proof of (i).

\textbf{(ii)}
The uniqueness can be established by a contradiction argument,
appealing again to the contractiveness of the nonlinear terms established above
and then studying the linearized problem with trivial data.
Instead of carrying out this approach,
now that we have in hand a solution with estimates adequate for our application,
we refer the reader to \cite{AndLoc} for a proof of uniqueness,
which will not be needed in this article.

\textbf{(iii)}
For the analyticity see \cite{MzH} or Proposition 2.8 in \cite{CorDef}.
A proof of the existence of a harmonic coordinate system near infinity
for an arbitrary asymptotically flat metric can be found in \cite{BarADM} for instance.
In the present, nearly-Euclidean setting,
if $\{x^i\}$ are Cartesian coordinates on $\R^3$ (restricted to $M$),
the estimate for $g-\delta$ ensures that
$\Delta_g x^i$ can be made arbtirarily small in $C^{0,\alpha,2+\beta}$
by taking $\epsilon$ small, so, picking a bounded right inverse
$\widetilde{G}: C^{0,\alpha,2+\beta} \to C^{2,\alpha,\beta}$ for $\Delta_g$,
we get $g$-harmonic coordinates $\{x^i-\widetilde{G}\Delta_g x^i\}$ on the interior of $M$.

\textbf{(iv)} The absence of closed minimal surfaces likewise follows from the smallness of $g-\delta$
and from the maximum principle.
Indeed the function $r^2$ on $M$ of course has $\delta$-Hessian $(r^2)_{;ab}=2\delta_{ab}$
and $g$-Hessian $(r^2)_{|ab}=(r^2)_{;ab}-r(g_{ar;b}+g_{br;a}-g_{ab;r})$.
For $\epsilon$ sufficiently small $r^2$ is then everywhere strictly convex on $(M,g)$,
so its restriction to any minimal surface in $(M,g)$
is subharmonic and as such can attain a maximum value only on its boundary.
Thus $M$ is devoid of closed minimal surfaces,
and similarly if $\Sigma$ is any least-area surface in $M$ enclosing $\partial M$,
then off $\partial M$ it is a properly embedded minimal surface,
so in fact coincides with $\partial M$, which is thereby outer-minimizing.

\textbf{(v)}
For the mass estimate we first observe that
  \begin{equation}
    \madm[g]=\frac{1}{16\pi} \lim_{r \to \infty} 
      \int_{\abs{\vec{x}}=r} \left( \eta_{ra}^{\;\;\;\; ;a} - \eta^c_{\;\; cr} \right)
        + O\left(\norm{\gamma-\iota^*\delta}_{3,\alpha}^2+\norm{H+2}_{2,\alpha}^2\right)
  \end{equation}
in light of the bound (\ref{thetaest}).
By item (ii) of Lemma \ref{linsol} the linearized metric $\eta$
is conformally flat at infinity with harmonic conformal factor $-2u$,
so that, using equation (\ref{uavg}), for large $r$
  \begin{equation}
    \begin{aligned}
      \int_{\abs{\vec{x}}=r} \left( \eta_{ra}^{\;\;\;\; ;a} - \eta^c_{\;\; cr} \right)
      &=
      4\int_{\abs{\vec{x}}=r} u_{,r}
      =
      4\int_{\abs{\vec{x}}=1} u_{,r}
      =
      -4\int_{\partial M} u \\
      &=
      \int_{\partial M} \left[2(H+2) - (\gamma-\iota^*\delta)_{\mu}^{\;\; \mu}\right].
    \end{aligned}
  \end{equation}

\end{proof}

\section{Proof of the main theorem and its corollary}
\subsection*{Proof of Theorem \ref{mouthm}}
To prove Theorem \ref{mouthm} take $\epsilon$ as in Proposition \ref{statext}
(say with $\alpha=\beta=1/2$),
$\gamma \in C^4(T^*\partial M^{\odot 2}), H \in C^3(\partial M)$ 
satisfying
$\norm{\gamma-\iota^*\delta}_{C^4}+\norm{H+2}_{C^3}<\epsilon$,
and let $g$ be the corresponding extension guaranteed by item (i) of the proposition.
Then $g \in \mathcal{PM}[\gamma,H]$ (recall (\ref{PM})),
and by item (iv) of the proposition $\partial M$ is outer-minimizing in $(M,g)$,
so by definition (\ref{mout}) the ADM mass of $g$ is an upper bound for
the outer mass of $(S^2,\gamma,H)$, whence by item (v) of the proposition
  \begin{equation}
  \label{ub}
    \mout[\gamma,H]
    \leq 
    \frac{1}{16\pi}\int_{S^2} (6+2H-\gamma^\mu_{\;\; \mu})
      + C\left(\norm{\gamma_{\mu \nu}-(\iota^*\delta)_{\mu \nu}}_{3,\alpha}+\norm{H+2}_{2,\alpha}\right)^2.
  \end{equation}

In contrast to the elementary calculations leading to the upper bound,
for the lower bound we appeal to the inverse mean curvature flow
of Huisken and Ilmanen.
It is a well-known consequence of their proof \cite{HI}
of the Riemannian Penrose Inequality
(see in particular the remarks on page 426 preceding the proof of Positivity Property 9.1) 
that the ADM mass of an asymptotically flat manifold
satisfying the decay conditions (\ref{afdecay}) (or somewhat weaker conditions) and
having nonnegative scalar curvature is no less than the Hawking mass $\mH$ of any outer-minimizing
sphere it contains. Thus
  \begin{equation}
  \label{lb}
    \mout[\gamma,H] 
    \geq \mH[\gamma,H]
    :=
    \sqrt{\frac{\int_{S^2} \sqrt{\abs{\gamma}}}{16\pi}}
      \left(1-\frac{1}{16\pi}\int_{S^2} H^2 \, \sqrt{\abs{\gamma}}\right),
  \end{equation}
where $\sqrt{\abs{\gamma}}$
denotes the integration measure induced on $S^2$ by $\gamma$.
On the other hand a quick calculation reveals that
the linearization of $\mH$ at $(\iota^*\delta,-2)$
is just the integral appearing in (\ref{ub}),
which completes the proof of the theorem.

\subsection*{Proof of Corollary \ref{sscor}}

To prove the corollary let $p$ be a point in a smooth Riemannian manifold $(N,g)$,
and for each small $\tau>0$ let $B_\tau$ be the geodesic ball of center $p$ and radius $\tau$.
Setting $\gamma=\iota^*\tau^{-2}g$ and $H=\Hcal_\iota[\tau^{-2}g]$
and writing $R_{ab}$ for the Ricci curvature of $g$ at $p$
and $x^i$ for the i\textsuperscript{th} Cartesian coordinate function on $\R^3$
restricted to $S^2$,
by Taylor expansion in $\tau$ of $\gamma$ and $H$ (included in Appendix \ref{sms}),
we compute
  \begin{equation}
    \begin{aligned}
      \frac{1}{16\pi}\int_{S^2} \left(6+2H-\gamma^\mu_\mu\right)
      &=
      \frac{1}{16\pi}\int_{S^2} \left(\tau^2 R_{ij}x^ix^j+\frac{2}{3}\tau^3 R_{ij|k}x^ix^jx^k
        +\frac{1}{4}\tau^4 R_{ij|k\ell}x^ix^jx^kx^\ell \right) + O(\tau^5) \\
      &=
      \frac{1}{48\pi}\tau^2 R_{ij} \int_{S^2} \abs{x}^2\delta^{ij}
        + 0 + \frac{1}{64\pi}\tau^4 R_{ij|k\ell} \int_B \partial^i (x^jx^kx^\ell) + O(\tau^5) \\
      &=
      \frac{1}{12}R\tau^2 + \frac{1}{192\pi}\tau^4R_{ij|k\ell} \int_B \abs{x}^2
        \left(\delta^{ij}\delta^{k\ell}+\delta^{ik}\delta^{j\ell}+\delta^{i\ell}\delta^{jk}\right)
        + O(\tau^5) \\
      &=
      \frac{1}{12}R\tau^2 + \frac{1}{120}\Delta R\tau^4 + O(\tau^5)
    \end{aligned}
  \end{equation}
and
  \begin{equation}
    \norm{\gamma-\iota^*\delta}_{C^4}^2+\norm{H+2}_{C^3}^2
    \leq
    R_{ijk\ell}R^{ijk\ell}\tau^4 + O(\tau^5).
  \end{equation}
It follows from the theorem that
  \begin{equation}
    \mout[B_\tau, \tau^{-2}g] = 
      \begin{cases}
        \frac{1}{12}R\tau^2 + O(\tau^4) \mbox{ in general} \\
        \frac{1}{120}\Delta R \tau^4 + O(\tau^5) \mbox{ if $g$ is flat at $p$}.
      \end{cases}
  \end{equation}
Now the scaling law
$\madm[M,\lambda^2h]=\lambda \madm[M,h]$
for the ADM mass implies $\mout[B_\tau,g]=\tau\mout[B_\tau, \tau^{-2}g]$, 
completing the proof of the corollary.

\appendix

\section{Rapidly decaying solutions to the Poisson equation}
\label{Peqn}

The following result is needed in the proof of Lemma \ref{inhomlinsol}.
In this appendix $\Delta=\Delta_\delta$ is simply the flat Laplacian on $M$. 
Note that on $\R^3$ an estimate like (\ref{Pest})
for the Poisson equation $\Delta u = f$
is impossible whenever $q \geq 1$.
For example a strictly positive, smooth, compactly supported, spherically symmetric source $f$
obviously exhibits arbitrarily fast decay,
but the unique bounded solution is a nonzero constant times $r^{-1}$ outside a compact set.
On $M$ we can do better by forbidding the low (compared to $q$)
spherical harmonics at infinity.
 
\begin{lemma}
\label{Pfd}
Let $\alpha,\beta \in (0,1)$ and let $k,q$ be nonnegative integers.
Then there is a constant $C(\alpha,\beta,k,q)>0$ such that
for every $f \in C^{k,\alpha,q+\beta+2}(M)$
there exists $u \in C^{k+2,\alpha,q+\beta}(M)$
solving the Poisson equation $\Delta u = f$ and satisfying the bound
  \begin{equation}
  \label{Pest}
    \norm{u}_{k+2,\alpha,q+\beta} \leq C(\alpha,\beta,k,q)\norm{f}_{k,\alpha,q+\beta+2}.
  \end{equation}
\end{lemma}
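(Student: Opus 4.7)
The plan is to construct $u$ as a Newtonian potential on $\R^3$ corrected by a finite sum of multipole harmonics that are regular on $M$ but singular at the origin. The boundary $\partial M$ provides exactly the extra harmonic functions that $\R^3$ lacks, which is what allows the decay of $u$ to beat the generic rate $|x|^{-1}$ of the Newtonian potential.

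First I would extend $f$ to $\tilde f\in C^{k,\alpha,q+\beta+2}(\R^3)$, with $\norm{\tilde f}_{k,\alpha,q+\beta+2}\le C\norm{f}_{k,\alpha,q+\beta+2}$, by reflection across $\partial M$ followed by a smooth cutoff. Set
\[
  v(x) := -\frac{1}{4\pi}\int_{\R^3}\frac{\tilde f(y)}{|x-y|}\,dy,
\]
which solves $\Delta v=\tilde f$ on $\R^3$ and, by interior Schauder estimates applied on dyadic annuli $\{R\le|x|\le 2R\}$ after rescaling by $R$, enjoys the $C^{k+2,\alpha}$ regularity prescribed by $\tilde f$. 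Generically, however, $v$ decays only like $|x|^{-1}$, which falls short of the target rate whenever $q\ge 1$.

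Since $\tilde f=O(|y|^{-q-\beta-2})$, the multipole moments
\[
  M_{\ell,m} := \int_{\R^3}|y|^\ell\,\overline{Y_{\ell,m}(\hat y)}\,\tilde f(y)\,dy
\]
converge absolutely for $\ell\le q-1$ (the integrand is $O(|y|^{\ell-q-\beta})$, which is integrable at infinity precisely in this range). I would then set
\[
  u(x):=v(x)+\sum_{\ell=0}^{q-1}\sum_{m=-\ell}^{\ell}\frac{M_{\ell,m}}{2\ell+1}\,\frac{Y_{\ell,m}(\hat x)}{|x|^{\ell+1}},
\]
which still solves $\Delta u=f$ on $M$ and, by design, has no multipole content of order below $q$ in its expansion at infinity. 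To verify $|u(x)|\le C|x|^{-q-\beta}$ I would split the Newtonian integral over $\{|y|<|x|/2\}$, $\{|x|/2\le|y|\le 2|x|\}$, and $\{|y|>2|x|\}$. In the inner region, expanding $|x-y|^{-1}$ in multipoles through order $q$ makes the leading $q$ terms cancel the added corrections up to tail integrals $\int_{|y|\ge|x|/2}|y|^\ell|\tilde f(y)|\,dy=O(|x|^{\ell-q-\beta+1})$, whose contribution, weighted by $|x|^{-\ell-1}$, is $O(|x|^{-q-\beta})$; the multipole remainder is bounded by $|x|^{-q-1}\int|y|^q|\tilde f(y)|\,dy=O(|x|^{-q-\beta})$. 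The middle and outer pieces are dispatched by the elementary estimates $\int_{B(x,|x|)}|x-y|^{-1}\,dy\le C|x|^2$ and $\int_{|y|\ge 2|x|}|y|^{-1}|\tilde f(y)|\,dy=O(|x|^{-q-\beta})$. Combining these pointwise estimates with the interior Schauder bounds on each rescaled annulus, and repeating the argument after differentiating under the integral, yields the full weighted H\"older estimate (\ref{Pest}).

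The main obstacle I expect is the bookkeeping required to obtain the sharp decay rate $q+\beta$: several distinct error terms—from each of the three integration regions, from the truncation of the multipole series, and from the tail of each formally-divergent higher moment—must each work out to exactly this rate, so the non-compactly-supported part of $\tilde f$ cannot be treated by a single triangle-inequality bound but must be split according to its position relative to $|x|$.
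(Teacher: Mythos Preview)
Your proposal is correct and matches the paper's approach essentially line for line: the paper also extends $f$ to $\R^3$, subtracts from the Newtonian potential the first $q$ multipole terms---packaged there as the degree-$(q-1)$ Taylor polynomial of the kernel $\Gamma(\vec{x}-\cdot)$ about $\vec{y}=0$, which is literally the same correction you write in spherical harmonics---bounds the result pointwise by the same three-region split, and then upgrades to the full weighted H\"older estimate via rescaled interior Schauder estimates. The only cosmetic difference is that the paper invokes a weighted Schauder inequality together with a smooth, compactly supported approximation of $f$ to justify membership of $u$ in $C^{k+2,\alpha,q+\beta}$, rather than differentiating under the integral as you suggest.
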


\begin{proof}
Fix a bounded extension operator $E: C^{k,\alpha,q+\beta+2}(M) \to C^{k,\alpha,q+\beta+2}(\R^3)$,
so that $Ef|_M=f$ for every $f$ in the domain of $E$.
For each $\vec{x} \in \R^3 \backslash \{0\}$ let $T_{q-1}(\vec{x},\cdot)$
be the degree-$(q-1)$ Taylor polynomial centered at $\vec{x}$
for the Newton potential $\Gamma(\vec{x})=-\abs{4\pi \vec{x}}^{-1}$ on $\R^3$
and let
$R_{q-1}(\vec{x},\cdot)=\Gamma(\vec{x}+\cdot)-T_{q-1}(\vec{x},\cdot)$
be the corresponding remainder.
Then, since $\Gamma$ is harmonic on $\R^3 \backslash \{0\}$,
for each $\vec{y} \in \R^3$ each term of $T_{q-1}(\cdot,\vec{y})$
is also harmonic on $\R^3 \backslash \{0\}$,
and therefore, $\Gamma$ itself being a fundamental solution for the Poisson equation,
the convolution
  \begin{equation}
  \label{udef}
    u(\vec{x})=\int_{\R^3} R_{q-1}(\vec{x},-\vec{y})(Ef)(\vec{y}) d^3y    
  \end{equation}
defines, at least distributionally, a solution to $\Delta u = f$ on $M$.

Given $\vec{x} \in \left\{\abs{\vec{x}} \geq \frac{1}{2} \right\}$,
we now estimate $\abs{u(x)}$,
assuming $f \in C^{0,0,q+\beta+2}(M)$, as follows.
On $\{\abs{\vec{y}} \geq 2\abs{\vec{x}}\} \subset \R^3$
we use $\abs{\Gamma(\vec{x}-\vec{y})} \leq 2\abs{\vec{y}}^{-1}$ to get
  \begin{equation}
  \label{bigyG}
  \abs{\int_{\{\abs{\vec{y}} \geq 2\abs{\vec{x}}\}} \Gamma(\vec{x}-\vec{y})(Ef)(\vec{y}) \, d^3y}
   \leq
  \frac{8\pi}{q+\beta}2^{-q-\beta}\norm{f}_{0,0,q+\beta+2}\abs{\vec{x}}^{-q-\beta},
  \end{equation}
while on $\left\{\frac{1}{2}\abs{\vec{x}} \leq \abs{\vec{y}} \leq 2\abs{\vec{x}} \right\}
\subset \R^3$
we use $\abs{\vec{x}-\vec{y}} \leq 4\abs{\vec{x}}$ and
$\abs{(Ef)(\vec{y})} \leq \left(1+\frac{1}{2}\abs{\vec{x}}\right)^{-q-\beta-2}$
to get
  \begin{equation}
  \label{middleyG}
    \abs{\int_{\left\{\frac{1}{2}\abs{\vec{x}} \leq \abs{\vec{y}} \leq 2\abs{\vec{x}} \right\}}
      \Gamma(\vec{x}-\vec{y})(Ef)(\vec{y}) \, d^3y}
    \leq
    2^{q+\beta+5}\norm{f}_{0,0,q+\beta+2}\abs{\vec{x}}^{-q-\beta}.
  \end{equation}
On the other hand, each term of $T_{q-1}(\vec{x},-\vec{y})$
with total degree $j$ in $\vec{y}$
has absolute value bounded by $C(j)\abs{\vec{x}}^{-j-1}\abs{\vec{y}}^j$,
so
  \begin{equation}
  \label{bigyT}
    \abs{\int_{\left\{\abs{\vec{y}} \geq \frac{1}{2}\abs{\vec{x}}\right\}}
        T_{q-1}(\vec{x},-\vec{y})(Ef)(\vec{y}) \, d^3y}
    \leq
    C(q,\beta)\norm{f}_{0,0,q+\beta+2}\abs{\vec{x}}^{-q-\beta},
  \end{equation}
and, last, on the region $\left\{\abs{\vec{y}} \leq \frac{1}{2}\abs{\vec{x}} \right\}$
the remainder estimate
$\abs{R_{q-1}(\vec{x},-\vec{y})} \leq C(q) \abs{\vec{x}}^{-q-1} \abs{\vec{y}}^q$
yields
  \begin{equation}
  \label{smally}
    \abs{\int_{\left\{\abs{\vec{y}} \leq \frac{1}{2}\abs{\vec{x}} \right\}}
      R_{q-1}(\vec{x},-\vec{y})(Ef)(\vec{y}) \, d^3y}
    \leq
    C(q,\beta)\norm{f}_{0,0,q+\beta+2} \abs{\vec{x}}^{-q-\beta}.
  \end{equation}
Combining (\ref{udef}), (\ref{bigyG}), (\ref{middleyG}), (\ref{bigyT}), and (\ref{smally}),
we get
  \begin{equation}
  \label{C0est}
    \norm{u}_{0,0,q+\beta} \leq C(q,\beta) \norm{f}_{0,0,q+\beta+2}.
  \end{equation}

Next we have the weighted Schauder estimate
that there exists some $C(k,q,\alpha,\beta)>0$ such that for any
$v \in C^{k+2,\alpha,q+\beta}\left(\left\{\abs{\vec{x}} \geq \frac{1}{2}\right\}\right)$ we have
  \begin{equation}
  \label{Schauder}
    \norm{v|_M}_{k+2,\alpha,q+\beta}
    \leq
    C(\alpha,\beta,k,q)
    \left(
      \norm{\Delta v}_{k,\alpha,q+\beta+2}+\norm{v}_{0,0,q+\beta}
    \right),
  \end{equation}
which can be established 
by applying a scaling argument to the standard interior Schauder estimate,
as in the proof of Proposition 26 in the appendix of \cite{SmiWei}.
In fact that very proof (which refers to $\R^3$ in its entirety)
applies here with obvious minor modifications.
The estimate (\ref{Pest}) then follows immediately from 
(\ref{Schauder}) and (\ref{C0est}), provided we know
that $u \in C^{k+2,\alpha,q+\beta}\left(\left\{\abs{\vec{x}} \geq \frac{1}{2}\right\}\right)$.

To see this last inclusion suppose $f \in C^{k,\alpha,q+\beta+2}(M)$.
Using cut-off functions and mollifiers
we can construct a sequence $\{f_n\}_{n=1}^\infty$
of functions, each defined and smooth on all of $\R^3$, such that
(i) for each $n$ the function $f_n$ is supported in $\{\abs{\vec{x}}<n\}$,
(ii) $\{f_n\}$ is bounded in $C^{k,\alpha,q+\beta+2}\left(\R^3\right)$
 by a constant times $\norm{f}_{k,\alpha,q+\beta+2}$,
and (iii) $\{f_n\}$ converges in $C^{k,0,q+2}\left(\R^3\right)$ to $Ef$.
For each nonnegative integer $n$ we next define the function
$u_n$ on $\R^3 \backslash \{0\}$ by
$u_n(\vec{x})=\int_{\R^3} R_{q-1}(\vec{x},-\vec{y}) f_n(\vec{y}) \, d^3y$.
Then by elliptic regularity each $u_n$ is smooth on $\R^3 \backslash \{0\}$,
where it is a classical solution of $\Delta u_n = f_n$.
Moreover $u_n$ is harmonic outside $\{\abs{\vec{x}}<n\}$.
Taking $\vec{x}$ with $\abs{\vec{x}} \geq 2n$
we can apply standard interior estimates for harmonic functions
(for example Theorem 2.10 in \cite{GT})
on a ball of center $\vec{x}$ and radius $\frac{1}{2}\abs{\vec{x}}$
in conjunction with (\ref{C0est}) to secure
$u_n \in C^{k+2,\alpha,q+\beta}(M)$ as claimed.
Thus, as shown above, (\ref{Pest}) applies to each $u_n$,
establishing that $\{u_n\}$ is bounded in $C^{k+2,\alpha,q+\beta}(M)$
by $C(k,q,\alpha,\beta)\norm{f}_{k,\alpha,q+\beta+2}$.
Consequently (see for example Lemma 27 in \cite{SmiWei})
there exists a subsequence converging in $C^{k+2,0,q}(M)$
to a function $u \in C^{k+2,\alpha,q+\beta}(M)$
necessarily satisfying
$\norm{u}_{k+2,\alpha,q+\beta} \leq C(\alpha,\beta,k,q)\norm{f}_{k,\alpha,q+\beta+2}$
and $\Delta u=f$,
which ends the proof.
\end{proof}

\section{Variation of the boundary data with respect to ambient metric}
\label{varapp}
Let $M$ be a manifold equipped with a smooth family
$\{g_t\}_{t \in \R}$ of Riemannian metrics
(with the $t$ subscript frequently suppressed below),
and let $\phi: \Sigma \to M$ be a codimension-one immersion,
which we may assume to be a two-sided embedding,
since the following calculations are purely local.
We use the abstract index notation
(as described in Section 2.4 of \cite{Wald} for example),
labelling components in $TM$ (or its dual) with Latin indices
and components in $T\Sigma$ (or its dual) with Greek indices.

For notational convenience we set
  \begin{equation}
    \dot{g}_{ab}:=\frac{d}{dt}\left(g_{ab}\right).
  \end{equation}
As is typical we write $g^{-1}$ as $g^{ab}$ and,
at any given value of $t$, indices are raised and lowered via $g_t$.
Thus, to clarify our notation,
from the identity $g_{ac}g^{cb}=\delta_a^{\;\;b}$ and the product rule we have for instance
  \begin{equation}
    \frac{d}{dt}\left(g^{ab}\right)=-\dot{g}^{ab}.
  \end{equation}
We also fix a ($t$-dependent) unit normal $N^a$ and write $\nu^a$ for its value at $t=0$.
An $n$ index on a tensor will indicate contraction with $N$;
for example
  \begin{equation}
    \dot{g}_{nn}=\dot{g}_{ab}N^aN^b=\dot{g}^{ab}N_aN_b=\dot{g}^{nn}
=\dot{g}^n_{\;\;n}
=\dot{g}_n^{\;\;n}.
  \end{equation}

Setting
  \begin{equation}
    \dot{N}_a:=\frac{d}{dt}\left(N_a\right),
  \end{equation}
since the $1$-form $N_a$ is always proportional to $\nu_a$ and since
  \begin{equation}
    0=\frac{d}{dt}(g^{ab}N_aN_b)=-\dot{g}^{nn}+2\dot{N}_aN^a,
  \end{equation}
we have
  \begin{equation}
    \dot{N}_a = \frac{1}{2}\dot{g}^{bc}N_aN_bN_c = \frac{1}{2}\dot{g}^{nn}N_a.
  \end{equation}

We write $A_{\alpha \beta}$ for the ($t$-dependent)
scalar-valued second fundamental form of $\phi$
relative to $g_{ab}$ and $N^a$.
We extend $N^a$ (a section of $\phi^*(TM)$)
by ($t$-dependent) parallel transport along the ($t$-dependent)
geodesics it generates to a vector field (a section of $TM$) of the same name
on a neighborhood of $\Sigma$.
Then
  \begin{equation}
    A_{\alpha \beta}=-\phi^a_{\;\;,\alpha}\phi^b_{\;\;,\beta}N_{a|b},
  \end{equation}
using a vertical bar to indicate covariant differentiation defined by $g_t$.
Where convenient we will alternatively use $\Dbar$
to represent the same Levi-Civita connection induced by $g_t$,
and we define $\dot{\Dbar}_{ab}^{\;\;\;\;c}$ by
  \begin{equation}
    \dot{\Dbar}(X,Y):=\frac{d}{dt}\Dbar_X Y \mbox{ for $X^a,Y^a$ independent of $t$}.
  \end{equation}

Now we compute
  \begin{equation}
    \begin{aligned}
      \frac{d}{dt}\left(N_{a|b}\right)&=-\dot{\Dbar}_{abc}N^c+\Dbar_b \dot{N}_a \\
      &=
      -\frac{1}{2}\left(\dot{g}_{ac|b}+\dot{g}_{bc|a}-\dot{g}_{ab|c}\right)N^c
        + \frac{1}{2}\dot{g}^{nn}N_{a|b} + \frac{1}{2}\dot{g}^{nn}_{\;\;\;\; ,b}N_a,
    \end{aligned}
  \end{equation}
so
  \begin{equation}
    \dot{A}_{\alpha \beta}
    :=
    \frac{d}{dt}\left(A_{\alpha \beta}\right)
    =
    \frac{1}{2}A_{\alpha \beta}\dot{g}^{nn}
      +\frac{1}{2}\phi^a_{\;\;,\alpha}\phi^b_{\;\;,\beta}
        \left(\dot{g}_{an|b}+\dot{g}_{bn|a}-\dot{g}_{ab|n}\right)
  \end{equation}
and therefore, writing $H:=\left(\phi^*g\right)^{\alpha \beta}A_{\alpha \beta}$
for the scalar-valued mean curvature of $\phi$ relative to $g_t$ and $N_t$
(and raising Greek indices via $\phi^*g_t$),
  \begin{equation}
  \label{Hvar}
    \dot{H}=-\phi^a_{\;\;,\alpha}\phi^b_{\;\;,\beta}\dot{g}_{ab} A^{\alpha \beta}
      + \frac{1}{2}H\dot{g}^{nn}
      + \frac{1}{2}\phi^a_{\;\;,\alpha}\phi^{b,\alpha}
        \left(\dot{g}_{an|b}+\dot{g}_{bn|a}-\dot{g}_{ab|n}\right).
  \end{equation}

\subsection*{Diffeomorphisms}
Now suppose $g_0$ is a given metric on $M$ and suppose $\nu^a$ is a unit normal as above,
extended, without relabelling, to a vector field on a neighborhood of $\Sigma$
by parallel translation along the geodesics it generates.
We consider $g_t$ defined by the evolution of $g_0$ under the flow generated by
a $t$-independent vector field $\xi^a$.

\subsubsection*{Normal fields}
First suppose $\xi^a=f\nu^a$ for some $t$-independent function $f \in C^2_{loc}(M)$.
Then
  \begin{equation}
    \dot{g}_{ab}=\left(L_{\xi}g_t\right)_{ab}=f_{|b}\nu_a+f_{|a}\nu_b+2f\nu_{a|b},
  \end{equation}
where $L$ denotes Lie differentiation and we have used the fact that $\nu_{a|b}=\nu_{b|a}$.
Consequently
  \begin{equation}
    \frac{d}{dt}(\phi^*g)_{\alpha \beta}=-2A_{\alpha \beta}\phi^*f,
  \end{equation}
  \begin{equation}
    \dot{g}^{nn}=2f_{,n}
  \end{equation}
and
  \begin{equation}
    \dot{g}_{ab|c}=f_{|bc}\nu_a+f_{|b}\nu_{a|c}
      +f_{|ac}\nu_b+f_{|a}\nu_{b|c}+2f_{|c}\nu_{a|b}+2f\nu_{a|bc}.
  \end{equation}
In particular we find
  \begin{equation}
    \begin{aligned}
      \phi^a_{\;\;,\alpha}\phi^b_{\;\;,\beta}\dot{g}_{ab|n}
      &=
      -2(\phi^*f_{,n})A_{\alpha \beta}+2\phi^a_{\;\;,\alpha}\phi^b_{\;\;,\beta}\nu_{a|bn} \\
      &=
      -2(\phi^*f_{,n})A_{\alpha \beta}+2\phi^a_{\;\;,\alpha}\phi^b_{\;\;,\beta}\nu_{a|nb}
        +2\phi^a_{\;\;,\alpha}\phi^b_{\;\;,\beta}R_{nbna},
    \end{aligned}
  \end{equation}
with curvature convention
$R_{abcd}=\langle \Dbar_{\partial_a}\Dbar_{\partial_b}\partial_c
  -\Dbar_{\partial_b}\Dbar_{\partial_a}\partial_c, \partial_d \rangle$,
and
  \begin{equation}
    \phi^a_{\;\;,\alpha}\phi^b_{\;\;,\beta}\dot{g}_{an|b}
    =
    -(\phi^*f_{,n})A_{\alpha \beta}+\phi^a_{\;\;,\alpha}\phi^b_{\;\;,\beta}f_{|ab}.
      +2\phi^a_{\;\;,\alpha}\phi^b_{\;\;,\beta}f\nu_{a|nb},
  \end{equation}
Using also
  \begin{equation}
    \phi^a_{\;\;,\alpha}\phi^b_{\;\;,\beta}f_{|ab}
    =
    (\phi^*f)_{:\alpha \beta} - A_{\alpha \beta}\phi^*f_{,\nu},
  \end{equation}
with $:$ indicating covariant differentiation relative to $\phi^*g$,
and
  \begin{equation}
    \nu_{a|nb}=\nu_{n|ab}=-\nu_{a|c}\nu^c_{\;\;b} 
  \end{equation}
at last we obtain from (\ref{Hvar})
  \begin{equation}
    \dot{H}= (\phi^*f)_{:\alpha}^{\;\;\; :\alpha}+\abs{A}^2\phi^*f+\phi^*R_{nn}f.
  \end{equation}

\subsubsection*{Tangential fields}
Now suppose $\xi^a=W^a$ for $W \in C^2_{loc}(TM)$ everywhere orthogonal to $\nu$.
Then $W$ restricts to a vector field on $\Sigma$ which we will also call $W$. 
Then
  \begin{equation}
    \dot{g}_{ab}=\left(L_{_W}g_t\right)_{ab}=W_{a|b}+W_{b|a},
  \end{equation}
so
  \begin{equation}
    \begin{aligned}
      &\frac{d}{dt}(\phi^*g)_{\alpha \beta}=W_{\alpha:\beta}+W_{\beta:\alpha}, \\
      &\dot{g}^{nn}=0, \\
      &\phi^a_{\;\;,\alpha}\phi^b_{\;\;,\beta}\dot{g}_{ab|n}
        = \phi^a_{\;\;,\alpha}\phi^b_{\;\;,\beta}(W_{a|bn}+W_{b|an}), \mbox{ and} \\
      &\phi^a_{\;\;,\alpha}\phi^b_{\;\;,\beta}\dot{g}_{an|b}
        = \phi^a_{\;\;,\alpha}\phi^b_{\;\;,\beta}(W_{a|nb}+W_{n|ab}).
    \end{aligned}
  \end{equation}
Therefore
  \begin{equation}
    \begin{aligned}
      \frac{1}{2}\phi^a_{\;\;,\alpha}\phi^{b_,\alpha}(\dot{g}_{an|b}+\dot{g}_{bn|a}-\dot{g}_{ab|n})
      &=
      \phi^a_{\;\;,\alpha}\phi^{b_,\alpha}(W_{a|nb}+W_{n|ab}-W_{a|bn}) \\
      &=
      \phi^a_{\;\;,\alpha}\phi^{b_,\alpha}W_{n|ab}+R_{nc}W^c,
    \end{aligned}
  \end{equation}
but the first term in (\ref{Hvar}) is
  \begin{equation}
    \begin{aligned}
      -\phi^a_{\;\;,\alpha}\phi^{b_,\alpha}\dot{g}_{ab} A^{\alpha \beta}
      &=
      2W_{a|b}N^{a|b} \\
      &=
      (W_aN^a)^b_{\;\;|b}-W_{a|b}^{\;\;\;\;\; |b}N^a-W_aN^{a|b}_{\;\;\;\;\; |b} \\
      &=
      0 - W_{n|c}^{\;\;\;\;\; |c} - W^aN^b_{\;\; |ab} \\
      &= -W_{n|c}^{\;\;\;\;\; |c} - R_{an}W^a + W^cH_{|c},
    \end{aligned}
  \end{equation}
and therefore, noting $W_{n|n}^{\;\;\;\;\; |n}=0$, we arrive at
  \begin{equation}
    \dot{H}=WH.
  \end{equation}

\subsection*{Conformal change}
Of course we can also apply (\ref{Hvar}) to
linearized conformal transformations,
but the noninfinitesimal transformation laws are simple enough.
If $h_{ab}$ is a fixed metric on $M$ and $\rho \in C^1_{loc}(M)$ is everywhere strictly positive,
then under the conformal change $\tilde{h}_{ab}=\rho^2h_{ab}$ we have the corresponding
conformal change $\phi^*\tilde{h}=(\phi^*\rho^2)(\phi^*h)$ for the induced metric on $\Sigma$.
For the change of its second fundamental form from $A$ to $\tilde{A}$, starting from the identity
$A=-\frac{1}{2}\phi^*L_{_N}h$ we compute
  \begin{equation}
    \begin{aligned}
      \tilde{A}
      &=
      -\frac{1}{2}\phi^*L_{\rho^{-1}N}(\rho^2h) \\
      &=
      -\frac{1}{2}(\phi^*\rho^{-1})\phi^*L_{_N}(\rho^2h) \mbox{ (since $N \perp \phi_*T\Sigma$)} \\
      &=
      (\phi^*\rho)A - (\phi^*\rho_{,n})\phi^*h,
    \end{aligned}
  \end{equation}
whence
  \begin{equation}
    \tilde{H} = (\phi^*\rho^{-1})H - (\dim \Sigma)(\phi^*\rho^{-2} \rho_{,n}).
  \end{equation}

Thus for a conformal family of metrics $g_t=\rho^2(t)g_0$ with $\rho(0)=1$
  \begin{equation}
    \begin{aligned}
      \dot{H}_0&=-(\phi^*\dot{\rho})H_0 - (\dim \Sigma)(\phi^*\dot{\rho}_{,n}) \\
        &= -\frac{H}{2}\left.\frac{d}{dt}\right|_{t=0}\phi^*\rho^2
            - \frac{\dim \Sigma}{2}\left.\frac{d}{dt}\right|_{t=0}\phi^*(\rho^2)_{,n}.
    \end{aligned}
  \end{equation}

\section{The induced metric of small metric spheres}
\label{sms}

Fix a point $p$ in a Riemannian manifold $(M,g)$ of dimension $n+1$,
and let $S^n$ be the unit $n$-sphere centered at the origin in $T_pM$.
Define $\Phi: \R \times S^n \to M$ by
  \begin{equation}
    \Phi(t,\theta)=\exp_p t\theta,
  \end{equation}
where $\exp$ is the exponential map corresponding to $g$.
We will casually identify vector fields on $S^n$
with $T_pM$-valued maps on $S^n$ (whose preimages
are orthogonal to their images)
as well as with their $t$-independent extensions to $\R \times S^n$.
Writing $\overline{D}$ for the Levi-Civita connection on $TM$ determined by $g$,
there is a unique connection $D$ on $\Phi^*TM$
satisfying $D_X \Phi^*\xi = \overline{D}_{\Phi_*X}\xi$
for any $X \in C^0_{loc}(T(\R \times S^n))$,
$\xi \in C^1_{loc}(TM)$;
it is torsion-free in the sense that
$D_X \Phi_*Y - D_Y \Phi_*X = \Phi_*[X,Y]$
and metric-compatible.
Writing $P_s^t$ for the corresponding parallel-transport map
from the fiber over $(s,\cdot)$ to the fiber over $(t,\cdot)$,
define further the time-dependent map
$\phi(t)$ from $TS^n$ to $T_pM$ by
  \begin{equation}
    \phi(t)V=P_t^0 \Phi_*V.
  \end{equation}
Defining also the parallely transported curvature operator
$\Rbar^a_{\;\; rbr}$ by
$\Rbar^a_{\;\; rbr}V^b=P_t^0 R(\Phi_*\partial_t,V)\Phi_*\partial_t$
and with similar notation for covariant derivatives of curvature,
we have 
  \begin{equation}
    \begin{aligned}
      &\phi(0)=0, \\
      &\partial_t\phi(0)V=\left.D_V\Phi_*\partial_t\right|_{t=0}=V, \\
      &\partial_t^2\phi^a_{\;\; b}(t)=\Rbar^a_{\;\; rcr}\phi^c_{\;\; b}, \\
      &\partial_t^3\phi^a_{\;\; b}(t)=\Rbar^a_{\;\; rcr|r}\phi^c_{\;\; b}
        +\Rbar^a_{\;\; rcr}\partial_t\phi^c_{\;\; b}, \\
      &\partial_t^4\phi^a_{\;\; b}(t)=\Rbar^a_{\;\; rcr|rr}\phi^c_{\;\; b}
        +2\Rbar^a_{\;\; rcr|r}\partial_t\phi^c_{\;\; b}
        +\Rbar^a_{\;\; rcr}\partial_t^2\phi^c_{\;\; b}, \\
      &\partial_t^5\phi^a_{\;\; b}(t)=\Rbar^a_{\;\; rcr|rrr}\phi^c_{\;\; b}
        +3\Rbar^a_{\;\; rcr|rr}\partial_t\phi^c_{\;\; b}
        +3\Rbar^a_{\;\; rcr|r}\partial_t^2\phi^c_{\;\; b}
        +\Rbar^a_{\;\; rcr}\partial_t^3\phi^c_{\;\; b}, \mbox{ and } \\
      &\partial_t^6\phi^a_{\;\; b}(t)=\Rbar^a_{\;\; rc|rrrr}\phi^c_{\;\; b}
        +4\Rbar^a_{\;\; rcr|rrr}\partial_t\phi^c_{\;\; b}
        +6\Rbar^a_{\;\; rcr|rr}\partial_t^2\phi^c_{\;\; b}
        +4\Rbar^a_{\;\; rcr|r}\partial_t^3\phi^c_{\;\; b}
        +\Rbar^a_{\;\; rcr}\partial_t^4\phi^c_{\;\; b},
    \end{aligned}
  \end{equation}
so
  \begin{equation}
    \begin{aligned}
      \phi^a_{\;\; b}(t)
      =
      t\delta^a_{\;\; b}
        + \frac{1}{6}t^3\Rbar^a_{\;\; rbr}(0)
        + \frac{1}{12}t^4\Rbar^a_{\;\; rbr|r}(0)
        + \frac{1}{40}t^5\Rbar^a_{\;\; rbr|rr}(0)
        + \frac{1}{120}t^5R^a_{\;\; rcr}\Rbar^c_{\;\; rbr}(0) \\
          - \frac{1}{5!}\int_0^1 (t-1)^5 \partial_t^6\phi^a_{\;\; b}(t) \, dt.
    \end{aligned}
  \end{equation}

If we write $\gamma(t)$, $A(t)$, and $H(t)$
for the pullbacks to $S^n$ (identified with $\{t\} \times S^n$)
under $\Phi|_{\{t\} \times S^n}$
of, respectively, the metric, second fundamental form, and mean curvature
induced by $g$ on the metric sphere of radius $t$,
then $\gamma(t)(V,W)|_p=g(\phi V, \phi W)|_p$ and for small $t$ we obtain
  \begin{equation}
  \begin{aligned}
    &\norm{t^{-2}\gamma_{\mu \nu}(t) - 
      \left[
        \iota^*\delta_{\mu \nu} + \frac{1}{3}t^2 R_{\mu r \nu r} 
          + \frac{1}{6}t^3R_{\mu r \nu r | r}
          + t^4\left(\frac{1}{20}R_{\mu r \nu r | rr} 
              + \frac{2}{45}R_{\mu r \lambda r}R^\lambda_{\;\; r \nu r}\right)
      \right]}_{C^k} \leq C(k)t^5, \\
    &\norm{t^2\gamma^{\mu \nu}(t) - 
      \left[
        \iota^*\delta^{\mu \nu} - \frac{1}{3}t^2 R^{\mu \;\; \nu}_{\;\; r \;\; r} 
          - \frac{1}{6}t^3R^{\mu \;\; \nu}_{\;\; r \;\; r | r}
          - t^4\left(\frac{1}{20}R^{\mu \;\; \nu}_{\;\; r \;\; r | rr} 
              - \frac{1}{15}R^\mu_{\;\; r \lambda r}R^{\lambda \;\; \nu}_{\;\; r \;\; r}\right)
      \right]}_{C^k} \leq C(k)t^5, \\
    &\norm{t^{-1}A(t) - 
      \left[
        -\iota^*\delta_{\mu \nu} - \frac{2}{3}t^2 R_{\mu r \nu r} - \frac{5}{12}t^3R_{\mu r \nu r | r}
          - t^4\left(\frac{3}{20}R_{\mu r \nu r | rr} 
              + \frac{2}{15}R_{\mu r \lambda r}R^\lambda_{\;\; r \nu r}\right)
      \right]}_{C^k} \leq C(k)t^5, \mbox{ and} \\
    &\norm{tH(t) - 
      \left[
        -2 + \frac{1}{3}t^2R_{rr} + \frac{1}{4}t^3R_{rr|r}
          + t^4\left(\frac{1}{10}R_{rr|rr} + \frac{1}{45}R_{\mu r \nu r}R^{\mu r \nu r}\right)
      \right]}_{C^k} \leq C(k)t^5,
  \end{aligned}
  \end{equation}
where the curvature factors are all evaluated at $p$
(and we observe the same index-ordering convention as for $\overline{R}$ above)
and
where the constant $C(k)$ depends on just the suprema
of the norms of the curvature of $g$ and its first $k+4$ derivatives
on any closed geodesic ball in $M$ with center $p$ and radius at least $t$.

\begin{bibdiv}
\begin{biblist}

\bib{AndBdy}{article}{
  title={On boundary value problems for Einstein metrics}
  author={Anderson, Michael T.}
  journal={Geometry and Topology}
  volume={12}
  number={4}
  date={2008}
  pages={2009--2045}
}

\bib{AndLoc}{article}{
  title={Local existence and uniqueness for exterior static vacuum Einstein metrics}
  author={Anderson, Michael T.}
  journal={Proceedings of the American Mathematical Society}
  volume={143}
  date={2015}
  pages={3091-3096}
}

\bib{AJ}{article}{
  title={Embeddings, immersions and the Bartnik quasi-local mass conjectures},
  author={Anderson, Michael T.}
  author={Jauregui, Jeffrey L.}
  journal={preprint}
  eprint={arXiv:1611.08755}
  year={2016}
}

\bib{AndKhu}{article}{
  title={On the Bartnik extension problem for the static vacuum Einstein equations}
  author={Anderson, Michael T.}
  author={Khuri, Marcus A.}
  journal={Classical and Quantum Gravity}
  volume={30}
  number={12}
  date={2013}
}

\bib{ADM}{article}{
title={Republication of: The dynamics of general relativity},
  author={Arnowitt, Richard}
  author={Deser, Stanley}
  author={Misner, Charles W.}
  journal={General Relativity and Gravitation},
  volume={40},
  number={9},
  pages={1997--2027},
  year={2008}
}

\bib{BarADM}{article}{
  title={The mass of an asymptotically flat manifold},
  author={Bartnik, Robert},
  journal={Communications on Pure and Applied Mathematics},
  volume={39},
  number={5},
  pages={661--693},
  year={1986},
}

\bib{BarNew}{article}{
  title={New definition of quasilocal mass},
  author={Bartnik, Robert},
  journal={Physical Review Letters},
  volume={62},
  number={20},
  pages={2346},
  year={1989},
}

\bib{BarTHL}{collection.article}{
  title={Energy in general relativity},
  author={Bartnik, Robert},
  booktitle={Tsing Hua Lectures on Geometry and Analysis},
  pages={5--27},
  editor={Shing-Tung Yau},
  date={1997},
  publisher={International Press},
}

\bib{BarICM}{collection.article}{
  title = {Mass and 3-metrics of non-negative scalar curvature},
  author = {Bartnik, Robert},
  year = {2002},
  volume = {II},
  pages = {231 -- 240},
  editor = {Li Tatsein},
  booktitle = {Proceedings ICM 2002},
}

\bib{Bray}{article}{
  title={Proof of the Riemannian Penrose inequality using the positive mass theorem},
  author={Bray, Hubert L.},
  journal={Journal of Differential Geometry},
  volume={59},
  number={2},
  pages={177--267},
  year={2001},
}

\bib{BrownLauYork}{article}{
  title={Canonical quasilocal energy and small spheres},
  author={Brown, J.D.}
  author={Lau, S.R.}
  author={York, J.W.},
  journal={Physical Review D},
  volume={59},
  number={6},
  pages={064028},
  year={1999},
  publisher={APS}
}

\bib{CPCMM}{article}{
   author={Cabrera Pacheco, Armando J.},
   author={Cederbaum, Carla},
   author={McCormick, Stephen},
   author={Miao, Pengzi},
   title={Asymptotically flat extensions of CMC Bartnik data},
   journal={Classical Quantum Gravity},
   volume={34},
   date={2017},
   number={10},
   pages={105001, 15}
}

\bib{Carlson}{article}{
  title={A note on the Beltrami stress functions},
  author={Carlson, Donald E.},
  journal={ZAMM-Journal of Applied Mathematics and Mechanics/
    Zeitschrift f{\"u}r Angewandte Mathematik und Mechanik},
  volume={47},
  number={3},
  pages={206--207},
  year={1967},
}

\bib{ChenWangYau}{article}{
  title={Evaluating small sphere limit of the Wang-Yau quasi-local energy}
  author={Chen, Po-Ning}
  author={Wang, Mu-Tao}
  author={Yau, Shing-Tung}
  journal={preprint}
  eprint={arXiv:1510.00904},
  year={2015}
}

\bib{CB}{book}{
  title={General relativity and the Einstein equations},
  author={Choquet-Bruhat, Yvonne},
  year={2009},
  publisher={Oxford University Press}
}

\bib{Chr}{collection.article}{
  title={Boundary conditions at spatial infinity},
  author={Chru{\'s}ciel, Piotr},
  editor={Bergmann, Peter G.}
  editor={De Sabbata, Venzo}
  booktitle={Topological Properties and Global Structure of Space-time},
  pages={49--59},
  year={1986},
  publisher={Springer}
}

\bib{CorDef}{article}{
  title={Scalar curvature deformation and a gluing construction for the Einstein constraint equations}
  author={Corvino, Justin}
  journal={Communications in Mathematical Physics}
  volume={214}
  pages={137--189}
  date={2000}
}

\bib{Eastwood}{article}{
   author={Eastwood, Michael},
   title={Ricci curvature and the mechanics of solids},
   journal={Austral. Math. Soc. Gaz.},
   volume={37},
   date={2010},
   number={4},
   pages={238--241},
}

\bib{FanShiTam}{article}{
  title={Large-sphere and small-sphere limits of the Brown-York mass},
  author={Fan, Xu-Qian}
  author={Shi, Yuguang}
  author={Tam, Luen-Fai}
  journal={Communications in Analysis and Geometry}
  volume={17}
  number={1}
  pages={37--72}
  date={2009}
}

\bib{GT}{book}{
   author={Gilbarg, David},
   author={Trudinger, Neil S.},
   title={Elliptic partial differential equations of second order},
   series={Classics in Mathematics},
   note={Reprint of the 1998 edition},
   publisher={Springer-Verlag, Berlin},
   date={2001},
}

\bib{Gurtin}{article}{
   author={Gurtin, Morton E.},
   title={A generalization of the Beltrami stress functions in continuum
   mechanics},
   journal={Arch. Rational Mech. Anal.},
   volume={13},
   date={1963},
   pages={321--329},
}

\bib{HorSch}{article}{
  title={Note on gravitational energy},
  author={Horowitz, G.T.}
  author={Schmidt, B.G.},
  journal={Proceedings of the Royal Society of London A: Mathematical, Physical and Engineering   Sciences},
  volume={381},
  number={1780},
  pages={215--224},
  year={1982},
}

\bib{HI}{article}{
  title={The inverse mean curvature flow and the Riemannian Penrose inequality},
  author={Huisken, Gerhard}
  author={Ilmanen, Tom},
  journal={Journal of Differential Geometry},
  volume={59},
  number={3},
  pages={353--437},
  year={2001},
}

\bib{MS}{article}{
  title={On the Bartnik mass of apparent horizons},
  author={Mantoulidis, Christos}
  author={Schoen, Richard},
  journal={Classical and Quantum Gravity},
  volume={32},
  number={20},
  pages={205002},
  year={2015},
}

\bib{MiaoCorners}{article}{
  title={Positive Mass Theorem on manifolds admitting corners along a hypersurface}
  author={Miao, Pengzi}
  journal={Advances in Theoretical and Mathematical Physics}
  volume={6}
  number={6}
  date={2002}
  pages={1163--1182}
}

\bib{MiaoStatExt}{article}{
  title={On existence of static metric extensions in general relativity}
  author={Miao, Pengzi}
  journal={Communications in Mathematical Physics}
  volume={241}
  number={1}
  date={2003}
  pages={27--46}
}

\bib{MiaoVarEff}{collection.article}{
  title={Variational effect of boundary mean curvature on ADM mass in general relativity}
  author={Miao, Pengzi}
  booktitle={Mathematical Physics Research on the Leading Edge}
  editor={Benton, Charles V.}
  publisher={Nova Science Publishers}
  address={Hauppauge, New York}
  year={2004}
  pages={145--171}
}

\bib{MzH}{article}{
  title={On the analyticity of static vacuum solutions of Einstein's equations}
  author={M\"{u}ller zum Hagen, Henning}
  journal={Mathematical Proceedings of the Cambridge Philosophical Society},
  volume={67}
  number={2}
  pages={415--421}
  year={1970}
}

\bib{NvW}{article}{
   author={Neudert, Michael},
   author={von Wahl, Wolf},
   title={Asymptotic behaviour of the div-curl problem in exterior domains},
   journal={Adv. Differential Equations},
   volume={6},
   date={2001},
   number={11},
   pages={1347--1376},
}

\bib{OM}{article}{
  title={Total energy momentum in general relativity},
  author={Murchadha, Niall {\'O}},
  journal={Journal of Mathematical Physics},
  volume={27},
  number={8},
  pages={2111--2128},
  year={1986}
}

\bib{ShiTam}{article}{
  title={Positive mass theorem and the boundary behaviors 
    of compact manifolds with nonnegative scalar curvature},
  author={Shi, Yuguang}
  author={Tam, Luen-Fai},
  journal={Journal of Differential Geometry},
  volume={62},
  number={1},
  pages={79--125},
  year={2002},
}

\bib{SmiWei}{article}{
  title={Quasiconvex foliations and asymptotically flat metrics of non-negative scalar curvature}
  author={Smith, Brian}
  author={Weinstein, Gilbert}
  journal={Communications in Analysis and Geometry}
  volume={12}
  number={3}
  pages={511--551}
  date={2004}
}

\bib{Sza}{article}{
  title={Quasi-local energy-momentum and angular momentum in general relativity}
  author={Szabados, L.B.}
  journal={Living Reviews in Relativity}
  volume={12}
  number={4}
  date={2009}
}

\bib{Via}{article}{
  title={Critical metrics for Riemannian curvature functionals}
  author={Viaclovsky, Jeff A.}
  journal={Lecture notes}
  eprint={arXiv:1405.6080}
  year={2014}
}

\bib{Wald}{book}{
  title={General relativity},
  author={Wald, Robert M.},
  year={2010},
  publisher={University of Chicago press}
}

\bib{Yu}{article}{
  title={The limiting behavior of the Liu-Yau quasi-local energy}
  author={Yu, P.P.}
  journal={Preprint}
  eprint={arXiv:0706.1081}
  year={2007}
}

\end{biblist}
\end{bibdiv}

\end{document}